\font\cyreight=wncyr8
\newcommand\N{{\mathbb N}}
\newcommand\Z{{\mathbb Z}}
\newcommand\R{{\mathbb R}}
\newcommand\one{{\mathbb 1}}
\DeclareMathOperator\sym{Sym}
\newtheorem{algorithm}{Algorithm}
\newtheorem{assumption}{Standing assumption}
\newenvironment{fsa}[1][auto]{\begin{tikzpicture}[->,>=stealth',
    shorten >=1pt,auto,node distance=3cm,double distance between line centers=0.45ex,
    initial text=,accepting/.style=accepting by arrow,
    every state/.style={inner sep=3pt,minimum size=0pt},
    every loop/.style={looseness=12},semithick,#1]}{\end{tikzpicture}}
\newcommand\autsq[5]{
  \begin{tikzpicture}[->,>=stealth',auto,semithick,baseline=2mm]
      \draw[->] (1,0) -- node[right] {$#4$} (1,1);
      \draw[->] (0,0) -- node[below] {\smash{\raisebox{-1.2ex}{$#2$}}} (1,0);
      \draw[->] (0,0) -- node{$#3$} (0,1);
      \draw[->] (0,1) -- node[above] {\smash{\raisebox{-0.4ex}{$#5$}}} (1,1);
      \node at (0.5,0.5) {$#1$};
    \end{tikzpicture}}
\begin{document}
\title{Decidability problems in automaton semigroups\thanks{Partially supported by ANR grant ANR-14-ACHN-0018-01}}
\author{Laurent Bartholdi}
\institute{\'Ecole Normale Sup\'erieure, Paris. \email{laurent.bartholdi@ens.fr}\and Georg-August-Universit\"at zu G\"ottingen. \email{laurent.bartholdi@gmail.com}}
\date{4 December 2016}
\maketitle
\begin{abstract}
  We consider decidability problems in self-similar semigroups, and in
  particular in semigroups of automatic transformations of $X^*$.

  We describe algorithms answering the word problem, and bound its
  complexity under some additional assumptions.

  We give a partial algorithm that decides in a group generated by an
  automaton, given $x,y$, whether an Engel identity
  ($[\cdots[[x,y],y],\dots,y]=1$ for a long enough commutator
  sequence) is satisfied. This algorithm succeeds, importantly, in
  proving that Grigorchuk's $2$-group is not Engel.

  We consider next the problem of recognizing Engel elements, namely
  elements $y$ such that the map $x\mapsto[x,y]$ attracts to $\{1\}$.
  Although this problem seems intractable in general, we prove that it
  is decidable for Grigorchuk's group: Engel elements are precisely
  those of order at most $2$.

  We include, in the text, a large number of open problems.  Our
  computations were implemented using the package \textsc{Fr} within
  the computer algebra system \textsc{Gap}.
\end{abstract}

\section{Introduction}
\epigraph{Automata are infinity --- come down to computer scientists' level}{Louis-Ferdinand C\'eline, \emph{Journey to the End of the Night}}

The theory of groups, and even more so of semigroups, is fundamentally
example-driven: on the one hand, these algebraic objects encode the
symmetry, regularity, and operations present in any kind of structure
under consideration, and are thus given to us for study; on the other
hand, there is such a diversity of groups and semigroups that the most
one can hope for, in their theory, is a description of the phenomena that
may occur.

Groups and semigroups are fundamentally (semi)groups of self-maps of a
set.  To construct infinite (semi)groups, we should therefore give
oneself an infinite set and a collection of self-maps, or possibly a
generating set of self-maps. Automata are of fundamental use, in the
guise of \emph{transducers}, in giving finite, recursive descriptions
of infinite self-maps; see~\cite{gecseg-c:ata}.

Now, even if the description of the (semi)group's generators are
completely explicit, this does not mean that the (semi)group is well
understood. For example, one may want to know, given two words $u,v$
representing products of generators, whether they are \emph{equal} in
the semigroup; \emph{semiconjugate} ($\exists w:u w\equiv w v$),
\emph{conjugate} ($\exists\text{ invertible }w:u w\equiv w v$), etc. These
\emph{decision problems} are, usually, undecidable, and the question
is which extra conditions on the semigroup's generators guarantee the
problem's decidability.

In this text, I will survey a general construction of
\emph{self-similar} semigroups, highlight important decision problems,
and describe stronger and stronger restrictions on the self-similarity
structure in parallel with solutions to decision problems. There is a
wealth of unsolved problems in this area, and I hope that the panorama
provided by this text will have some value in highlighting interesting,
yet-unexplored areas of mathematics and theoretical computer science.

The new results included in this text are in particular a partial
algorithm that answers, in self-similar groups, whether the Engel
property holds (for all $x,y\in G$, some long-enough iterated
commutator $[\cdots[x,y],\dots,y]$ is trivial). Remarkably, this
partial algorithm, once implemented, proved that the first Grigorchuk
group is not Engel.

\section{Self-similar semigroups}
A \emph{self-similar semigroup} is a semigroup acting in a
self-similar fashion on a self-similar set.

We are thus given a set $\Omega$ and a finite family of self-maps
$X=\{x\colon\Omega\righttoleftarrow\}$. The set $\Omega$ is
self-similar in the sense that subsets $x(\Omega)\subseteq\Omega$ are
defined and identified (via $x$) with $\Omega$, for a collection of
$x\in X$. Such systems are often called \emph{iterated function
  systems}.  The fundamental example is $\Omega=X^\infty$ the set of
infinite words over an alphabet $X$, and in that case $x\in X$ acts on
an infinite word by pre-catenation: $x(x_1x_2\dots)=x x_1x_2\dots$;
another example is $\Omega=X^*$, the tree of finite words, with again
the same identification of $X$ with self-maps of $\Omega$ by
pre-catenation.

\begin{definition}[\cite{nekrashevych:ssg}]\label{def:ss}
  Let $G$ be a semigroup acting on the right on a set $\Omega$. The
  action is called \emph{self-similar} if for every $x\in X,g\in G$
  there exist $h\in G,y\in X$ such that
  \begin{equation}\label{eq:ss}
    x(\omega)^g=y(\omega^h)\text{ for all }\omega\in\Omega.
  \end{equation}
\end{definition}

In other words, the action of $g\in G$ on $\Omega$ is as follows: it
carries the subset $x(\Omega)$ to $y(\Omega)$, and along the way
transforms $\Omega$ by $h$.
\[\begin{tikzpicture}
    \draw (-3,0) ellipse (18mm and 8mm);
    \draw (3,0) ellipse (18mm and 8mm);
    \draw (-3,2.5) ellipse (18mm and 8mm);
    \draw (3,2.5) ellipse (18mm and 8mm);
    \draw[fill=gray!10] (-3.5,2.6) ellipse (8mm and 5mm);
    \draw[fill=gray!10] (3.5,2.4) ellipse (8mm and 5mm);
    \node at (-1.2,3) {$\Omega$};
    \node at (-1.2,-0.5) {$\Omega$};
    \node at (4.8,3) {$\Omega$};
    \node at (4.8,-0.5) {$\Omega$};
    \draw (-1,0) edge[->] node[above] {$h$} +(2,0);
    \draw (-1,2.5) edge[->] node[above] {$g$} +(2,0);
    \draw (-4.8,0) edge[dashed,->] (-4.3,2.6);
    \draw (-1.25,0.2) edge[dashed,->] (-2.7,2.7);
    \draw (1.25,0.2) edge[dashed,->] (2.7,2.5);
    \draw (4.8,0) edge[dashed,->] (4.3,2.5);
    \draw (-3.1,0.9) edge[->] node[left] {$x$} (-3.3,1.6);
    \draw (3.1,0.9) edge[->] node[left] {$y$} (3.3,1.6);
  \end{tikzpicture}
\]

If we were to write the function application $x(\omega)$ on the right,
as $(\omega)x$, then we could rephrase~\eqref{eq:ss} as ``$x g=h y$''
qua composition of self-maps of $\Omega$.

The elements $h\in G,y\in X$ are not necessarily unique in
Definition~\ref{def:ss}. Let us assume that some choices are made for
them; then they may be encoded in a map
$\Phi\colon X\times G\to G\times X$, given by $(x,g)\mapsto (h,y)$
when~\eqref{eq:ss} holds. This map satisfies some axioms following
from the fact that $G$ is a semigroup acting on $\Omega$. We summarize
them in the following
\begin{definition}
  A \emph{self-similarity structure} for a semigroup $G$ is the data
  of a set $X$ and a map $\Phi\colon X\times G\to G\times X$ satisfying
  \[\Phi(x,\one)=(\one,x),\quad\Phi(x,g)=(h,y)\wedge\Phi(y,g')=(h',z)\Rightarrow\Phi(x,g g')=(h h',z).\]
\end{definition}

From a self-similarity structure, one can reconstruct a self-similar
action on $\Omega=X^*$ by defining recursively (for
$\varepsilon$ the empty word in $X^*$)
\begin{equation}\label{eq:action}
  \varepsilon^g=\varepsilon,\qquad(x w)^g=y(w^h)\text{ whenever }\Phi(x,g)=(h,y).
\end{equation}
The action on $X^*$ extends uniquely by continuity to an action on
infinite words $X^\infty$.

A self-similar semigroup may be \emph{defined} by specifying a
generating set $Q$, an alphabet $X$, and a map
$\Phi\colon X\times Q\to Q^*\times X$. The semigroup defined is then
the semigroup of self-maps of $X^*$ given by~\eqref{eq:action}. We
write that semigroup
\[G(\Phi).\]

It is quite convenient to describe a self-similar semigroup
$G=\langle Q\rangle$ by writing its self-similarity structure on the
perimeters of squares: there is a square for each $x\in X,g\in Q$; the
left, bottom, right, top labels are respectively $x,g,y,h$ when
$\Phi(x,g)=(h,y)$. In this manner, to compute the action of
$g=q_1\dots q_n$ on a word $x_1\dots x_m$, one writes $x_1\dots x_m$
on the left and $q_1\dots q_n$ on the bottom of an $m\times n$
rectangle, and one fills in the rectangle's squares one at a time. The
right label will then be $y_1\dots y_m$, the image of $x_1\dots x_m$
under $g$. Here are two examples of self-similar semigroups given in
this manner:

\begin{example}[Grigorchuk's group]\label{ex:grig}
  The generating set is $Q=\{a,b,c,d,\one\}$ and the alphabet is
  $X=\{1,2\}$. The self-similarity structure is given by
  \begin{xalignat*}{5}
    \autsq\Phi a12\one && \autsq\Phi b11a && \autsq\Phi c11a && \autsq\Phi
    d11\one && \autsq\Phi \one11\one,\\[1ex]
    \autsq\Phi a21\one && \autsq\Phi b22c && \autsq\Phi c22d && \autsq\Phi
    d22b && \autsq\Phi \one22\one.
  \end{xalignat*}
  The associated automaton is depicted in Figure~\ref{fig:automata},
  left.  The Grigorchuk group will be denoted by $G_0$ throughout this
  text.

  It is a remarkable example of group: among its properties, it is an
  \emph{infinite, finitely generated torsion group}, namely, the group
  is infinite, but every element generates a finite subgroup. It is
  also a group of \emph{intermediate word-growth}, namely, the number
  $v(n)$ of group elements that are products of at most $n$ generators
  is a function growing asymptotically as
  \[\exp(n^{0.51})\prec v(n)\prec\exp(n^{0.76}).\]
  (the exact growth asymptotics are not known; see~\cites{bartholdi:lowerbd,brieussel:phd,bartholdi:upperbd}.)
\end{example}

\begin{example}[A two-state automaton of intermediate growth]\label{ex:2}
  The generating set is $\{q,r\}$, and the alphabet is
  $X=\{1,2\}$. The self-similarity structure is given by
  \begin{xalignat*}{4}
    \autsq\Phi r12r && \autsq\Phi r22s && \autsq\Phi s12s && \autsq\Phi s21s.
  \end{xalignat*}
  The associated automaton is depicted in Figure~\ref{fig:automata},
  right.  The semigroup $G_1=\langle r,s\rangle$ is infinite, and the
  growth of $G_1$ is better understood than that of $G_0$: letting
  $v(n)$ denote the number of elements of $G_1$ that are products of
  at most $n$ generators $r,s$, we have
  \[v(n)\approx 2^{5/2}3^{3/4}\pi^{-2}n^{1/4}\exp(\pi\sqrt{n/6}).\]
\end{example}

Automata can be naturally \emph{composed}, in two manners;
see~\cite{gecseg:products}. Let us consider a single automaton $\Phi$,
with stateset $Q$ and alphabet $X$. Then, for every $m,n\in\N$, there
is an automaton $\Phi_{m,n}$ with stateset $Q^n$ and alphabet $X^m$,
described by squares as follows. For all words $g\in Q^n$ and
$u\in X^m$, one writes $g,u$ respectively at the left and bottom of an
$m\times n$ rectangle, and fills it by the $1\times 1$ squares of the
automaton $\Phi$.

The meaning of these automata is the following. In $\Phi_{m,1}$, the
automaton has stateset $Q$ and alphabet $X^m$: its arrows are
length-$m$ directed paths in the automaton $\Phi$. In formul\ae, this
automaton $\Phi_{m,1}$ is defined by
\[\Phi_{m,1}(x_1\dots x_m,s_0)=(s_m,y_1\dots y_m)\text{ if }\Phi(x_i,s_{i-1})=(s_i,y_i)\text{ for all }i=1,\dots,m.
\]
The automaton $\Phi_{m,1}$ expresses the action of $Q$ on words of
length (a multiple of) $m$. Similarly, the automaton $\Phi_{1,n}$ has
stateset $Q^n$ and alphabet $X$, and is given by
\[\Phi_{1,n}(x_0,s_1\dots s_n)=(t_1\dots t_n,x_n)\text{ if }\Phi(x_{i-1},s_i)=(t_i,x_i)\text{ for all }i=1,\dots,n.
\]
It expresses the action on $X^*$ of words of length $n$ in $Q$. These
products may naturally be combined so as to give an automaton
$\Phi_{m,n}$ with stateset $Q^n$ and alphabet $X^m$.

We shall abuse notation and write $\Phi(u,g)$ instead of
$\Phi_{m,n}(u,g)$, since it is always clear from the arguments $u,g$
what the values of $m,n$ are.

\section{Decision problems}
The study of decision problems is commonly attributed to
Dehn~\cite{dehn:unendliche}, though its origins can be traced to
Hilbert's work. Let $G$ be a finitely generated semigroup, and
consider a finite generating set $Q$. There is therefore an
\emph{evaluation} map $Q^*\twoheadrightarrow G$, written
$w\mapsto\overline w$. Consider the following questions:
\renewcommand\descriptionlabel[1]{\textbf{#1}:}
\begin{description}
\item[Word problem (WP)] Given $u,v\in Q^*$, does one have
  $\overline u=\overline v$?
\item[Division problem] Given $u,v\in Q^*$, is $\overline u$ a left divisor of
  $\overline v$? I.e.\ does one have $\overline u G\ni\overline v$? Is
  it a right divisor?
\item[Order problem (OP)] Given $u\in Q^*$, is $\langle u\rangle$ finite? If so, what is
  its structure, i.e.\ what are the minimal $m<n$ with
  $\overline u^m=\overline u^n$?
\item[Inverse problem] Given $u\in Q^*$, is $\overline u$ invertible?
\item[Conjugacy problem] Given $u,v\in Q^*$, are they semiconjugate, i.e.\ is
  there $g\in G$ with $g\overline u=\overline v g$? Are they
  conjugate, i.e.\ is there an invertible $g\in G$ with
  $g\overline u=\overline v g$
\item[Membership problem (MP)] Given $u,v_1,\dots,v_n\in Q^*$, does one have
  $\overline u\in\langle\overline{v_1},\dots,\overline{v_n}\rangle$?
\item[Structure problem] Given $u_1,\dots,u_n\in Q^*$, is the semigroup
  $\langle\overline{u_1},\dots,\overline{u_n}\rangle$ free? Is it
  finite?
\item[Engel problem] Given $u,v\in Q^*$, and assuming
  $\overline u,\overline v$ are invertible, are they an \emph{Engel
    pair}, i.e.\ does there exist $n\in\N$ such that the $n$-fold
  iterated commutator satisfies
  $[\dots[\overline u,\underbrace{\overline v],\dots,\overline
    v}_n]=\one$?
\item[Ad-nilpotence problem] Given $v\in Q^*$ and assuming $\overline v$ is
  invertible, is it \emph{ad-nilpotent}, i.e.\ is $(g,\overline v)$ an
  Engel pair for all invertible $g\in G$?
\item[Orbit problem (OP)] Assume that a countable set $\Omega$ is
  given via a computable bijection with (say) $\N$, and that $G$ acts
  on the right on $\Omega$.  Given $\omega_1,\omega_2\in\Omega$, does
  there exist $g\in G$ with $\omega_1^g=\omega_2$? If so, which one?
\end{description}

In all cases, what is required is an algorithm that answers the
question. Equivalently, the inputs (a word, a finite list of words,
\dots) may be encoded into $\N$ by a computable bijection. Let
$\mathscr P\subseteq\N$ denote the set of inputs for which the answer
to the question is ``yes'. One then asks whether $\mathscr P$ is
recursive, namely whether there exists an algorithmic enumeration of
$\mathscr P$ and of $\N\setminus\mathscr P$.

There are $2^{\aleph_0}$ finitely generated semigroups up to
isomorphism, and only $\aleph_0$ algorithms, so for ``most'' finitely
semigroups all the above decision problems have a negative solution.

Note also that each of these decision problems are stated for a fixed
semigroup $G$. One may also ask directly some questions on $G$:
\begin{description}
\item[Semigroup structure] Is $G$ trivial? finite? commutative? free?
\item[Invertibility] Is the subgroup $G^\times$ of invertibles finite?
  If so, what is it?
\item[Group structure] Assume $G$ is a group. Is $G$ nilpotent? free?
  Engel, i.e.\ is every invertible element ad-nilpotent?
\end{description}
These questions make a lot of sense for a human, especially if the
semigroup is given implicitly as in Example~\ref{ex:grig} or
Example~\ref{ex:2}. They make no sense as decision problems: either
they hold or they don't, but they have an unequivocal answer for every
given $G$.

These last questions become much more interesting if one is given,
rather than a semigroup $G$, a countably infinite family of
semigroups. They could be given by semigroup presentations
\[G=\langle s_1,\dots,s_n\mid r_1=r'_1,\dots,r_m=r'_m\rangle;\]
or by ``self-similar presentations''
\begin{equation}\label{eq:pres}
  G=G(\Phi)
\end{equation}
meaning, in the sense of the previous section, the semigroup acting
faithfully on $X^*$ with the action given via~\eqref{eq:action} by
$\Phi\colon X\times Q\to Q^*\times X$.

\section{Some negative results}
One seldom solves these decision problems directly. Rather, one uses
\emph{Turing reduction}: a problem $\mathscr A$ \emph{Turing-reduces}
to a problem $\mathscr B$ if there exists an algorithm answering
$\mathscr A$ given an oracle for $\mathscr B$. In this manner, if
$\mathscr A$ is unsolvable then so is $\mathscr B$.

It is a well-known fact that there are finitely-presented
semigroups~\cites{markoff:wpsemigroup,post:wpsemigroup}, and even
finitely-presented groups~\cite{novikov:wp}, with unsolvable word
problem. Indeed Turing machines may be encoded in semigroup
presentations, in such a manner that a word is trivial if and only if
the corresponding Turing machine computation halts.

Let $Q=\langle x_1,\dots,x_n\mid r_1,\dots,r_m\rangle$ be a finitely
presented group with unsolvable word problem. Mihailova considers
in~\cite{mihailova:directproducts}
\[M=\langle(x_1,x_1),\dots,(x_n,x_n),(\one,r_1),\dots,(\one,r_m)\rangle\subset
  F_n\times F_n.
\]
Then $(\one,u)\in M$ holds if and only if $\overline u=\one$ holds in
$Q$; so the membership problem is unsolvable in $F_2\times F_2$.
(Note however that the membership problem is solvable in free groups).

Now it is well-known that $F_n$ embeds in $\operatorname{GL}_2(\Z)$,
so $F_n\times F_n$ embeds in $\operatorname{GL}_4(\Z)$. Clearly, if
$G$ acts on the right on $\Omega$, and $H\le G$ is a subgroup with
$G,H$ finitely generated, and $\omega\in\Omega$ has trivial stabilizer
in $G$, then the membership problem for $H$ in $G$ Turing-reduces to
the orbit problem for $G$ acting on $\Omega$: given
$H=\langle v_1,\dots,v_n\rangle$ and $u\in G$, one has $u\in H$ if and
only if $\omega,\omega^u$ are in the same $H$-orbit.

In particular, if $K$ is a finitely generated group and $H$ acts on
$K$, then the orbit problem of $H$ on $K$ Turing-reduces to the
conjugacy problem in $K\rtimes H$. It follows that there exist
finitely generated subgroups of $\operatorname{GL}_4(\Z)$ with
unsolvable membership and conjugacy problems.

\section{Automaton semigroups}
Consider a semigroup $G$ given by a self-similar presentation: there
are finite sets $X,Q$ and a map $\Phi\colon X\times Q\to Q^*\times X$,
defining a faithful action of $G=G(\Phi)$ on $X^*$
by~\eqref{eq:action}.

\begin{question}
  Is the word problem in $G$ decidable?
\end{question}

I suspect the answer is ``no'', but I don't know. Let us put
restrictions on $\Phi$ to make the problem more tractable.
\begin{definition}
  A \emph{Mealy automaton} is a map $\Phi\colon X\times Q\to Q\times X$.
\end{definition}
We display the automaton as a graph with stateset $Q$ and, for every
$x\in X,q\in Q$ with $\Phi(x,q)=(t,y)$, an edge starting in $q$,
ending in $t$, labeled `$(x,y)$' and called a \emph{transition}. The
letters $x$ and $y$ are respectively called the \emph{input} and
\emph{output} labels. Since the two formalisms are obviously
equivalent, we call \emph{automaton} either the map $\Phi$ or its
representation as a graph. The examples~\ref{ex:grig} and~\ref{ex:2}
above are Mealy automata, depicted respectively left and right in
Figure~\ref{fig:automata}.

\begin{figure}
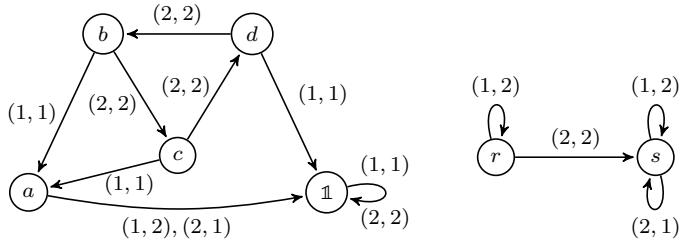

\[\begin{fsa}[scale=0.7]
  \node[state] (b) at (1.4,3) {$b$};
  \node[state] (d) at (4.2,3) {$d$};
  \node[state] (c) at (2.8,0.7) {$c$};
  \node[state] (a) at (0,0) {$a$};
  \node[state] (e) at (5.6,0) {$\one$};
  \path (b) edge node[left,pos=0.6] {$(2,2)$} (c) edge node[left] {$(1,1)$} (a)
        (c) edge node[left,pos=0.6] {$(2,2)$} (d) edge node[below right=-1mm] {$(1,1)$} (a)
        (d) edge node[above] {$(2,2)$} (b) edge node {$(1,1)$} (e)
        (a) edge[bend right=10] node[below] {$(1,2),(2,1)$} (e)
        (e) edge[loop right] node[above=1mm] {$(1,1)$} node[below=1mm] {$(2,2)$} (e);
\end{fsa}\qquad
\begin{fsa}[scale=0.7]
  \node[state] (r) at (0,3) {$r$};
  \node[state] (s) at (3,3) {$s$};
  \path (r) edge[loop above] node {$(1,2)$} (r) edge node[above] {$(2,2)$} (s)
        (s) edge[loop above] node[above] {$(1,2)$} (s) edge[loop below] node[below] {$(2,1)$} (s);
\end{fsa}\]
\caption{The automata generating the Grigorchuk group (left,
  Example~\ref{ex:grig}) and the Sushchanskyy semigroup (right,
  Example~\ref{ex:2})}\label{fig:automata}
\end{figure}

In this graph interpretation, the action of $\langle Q\rangle$ on
$X^*$ and on $X^\infty$ are directly visible: given $q\in Q$ and a
word $x_1x_2\dots$, find a path in the graph starting at $q$ and
having input labels $x_1,x_2,\dots$. Let the output label on this path
be $y_1,y_2,\dots$; then the result of the action is
\[(x_1x_2\dots)^q=y_1y_2\dots.\]

We call \emph{automaton semigroup} a self-similar semigroup presented
by a Mealy automaton as in~\eqref{eq:pres}, and we write the semigroup
$G=G(\Phi)$.

Let $\Phi$ be an automaton with stateset $Q$ and alphabet $X$. For
$x\in X$ and $q\in Q$, we denote by $q@x$ the endpoint of the
transition in $\Phi$ starting at $q$ and with input label $x$, and we
denote by $\pi(q)$ the transformation of $X$ induced by the edges
starting at $q$.  Thus every transition in the Mealy automaton gives
rise to
\[\begin{fsa}[scale=0.8]
  \node[state,minimum size=8mm] (q) at (0,0) {$q$};
  \node[state,inner sep=0pt,minimum size=8mm] (r) at (4,0) {$q@x$};
  \path (q) edge node[above] {$(x,x^{\pi(q)})$} (r);
\end{fsa}.
\]

\begin{proposition}
  The word problem in an automaton group is solvable in linear space
  (and therefore in exponential time).
\end{proposition}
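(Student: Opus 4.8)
The plan is to reduce the word problem for two words $u,v\in Q^*$ to a reachability question in a finite — though exponentially large — graph whose vertices nevertheless admit linear-size descriptions, and then to solve that reachability question in nondeterministic linear space.

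First I would use faithfulness of the action of $G=G(\Phi)$ on $X^*$: $\overline u=\overline v$ holds if and only if $w^{\overline u}=w^{\overline v}$ for every $w\in X^*$. Extend the operations $q@x$ and $\pi(q)$ from generators to words: for $g\in Q^k$ and $w\in X^*$, let $g@w\in Q^k$ be the corresponding section, computed by running the composed automaton $\Phi_{|w|,k}$ on inputs $w$ and $g$, and let $\pi(\overline{g@w})\in\sym(X)$ be the permutation of $X$ read off from its output labels on single letters. These satisfy $g@(ww')=(g@w)@w'$ and the recursion $(xw)^{\overline g}=x^{\pi(\overline g)}\,w^{\overline{g@x}}$; reading $w^{\overline g}$ off letter by letter along this recursion shows that the action of $\overline g$ on $X^*$ is entirely encoded by the family $\bigl(\pi(\overline{g@w})\bigr)_{w\in X^*}$. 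Hence $\overline u=\overline v$ if and only if $\pi(\overline{u@w})=\pi(\overline{v@w})$ for all $w\in X^*$.

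Next I would introduce the finite directed graph $\Gamma$ with vertex set $Q^{|u|}\times Q^{|v|}$ and, for each $x\in X$, an edge $(p,q)\to(p@x,q@x)$; call a vertex $(p,q)$ \emph{bad} if $\pi(\overline p)\ne\pi(\overline q)$. By the previous paragraph, $\overline u\ne\overline v$ if and only if some bad vertex is reachable from $(u,v)$ in $\Gamma$, and since $\Gamma$ has at most $|Q|^{|u|+|v|}$ vertices, a bad vertex is reachable at all only if it is reachable along a path of length less than $|Q|^{|u|+|v|}$. For the complexity bound, set $n=|u|+|v|$: a vertex of $\Gamma$ is a pair of words of lengths $|u|,|v|$, hence described in $O(n)$ space; given $(p,q)$ and $x$, the successor $(p@x,q@x)$ and the predicate ``bad'' are computable in space polynomial in $n$ by running $\Phi$ on $p$ and on $q$. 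Reachability of a bad vertex from $(u,v)$ is therefore decided in nondeterministic space $O(n)$: start from $(u,v)$, repeatedly guess a letter $x$ and replace the current vertex by its $x$-successor, accept as soon as the current vertex is bad, and reject once a step counter — which ranges up to $|Q|^{|u|+|v|}$ and so fits in $O(n)$ bits — is exhausted. This is the announced linear-space procedure; by Savitch's theorem it runs in deterministic space $O(n^2)$ as well, and since any computation in space $s$ halts within $2^{O(s)}$ steps, the word problem is decidable in exponential time. For a group one could instead test $\overline{u\,v'}=\one$ with $v'$ a word spelling out $\overline v^{-1}$, but the comparison above uses no inverses and applies verbatim to automaton semigroups.

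The one genuine subtlety — and the step I would treat most carefully — is keeping the space linear rather than exponential. The naive approach of checking $w^{\overline u}=w^{\overline v}$ for all words $w$ up to the a priori length bound fails, because merely writing down such a $w$ already costs exponential space; the fix is to phrase the task as reachability in the implicitly presented graph $\Gamma$, so that the algorithm never stores more than a single vertex (a pair of sections) together with a binary counter. The termination estimate is then nothing more than the pigeonhole bound on $|V(\Gamma)|$, and the correctness of the reduction is the faithfulness argument of the second paragraph.
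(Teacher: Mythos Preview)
Your proposal is correct and follows essentially the same approach as the paper: build the composed automaton on length-$n$ words over $Q$ (you realise it as pairs in $Q^{|u|}\times Q^{|v|}$, the paper pads to a common length and works in $Q^n$) and test whether the states $u$ and $v$ are distinguishable by some input. The paper phrases this as a state-identification (minimisation) procedure while you phrase it dually as reachability of a ``bad'' pair; these are the two standard sides of automaton equivalence, and your explicit nondeterministic search with a counter makes the linear-space bound rather more concrete than the paper's one-line appeal to ``carefully arranging the order in which the graph is explored''.
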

\begin{proof}
  Let $\Phi$ be a Mealy automaton, and let $u,v\in Q^*$ be given
  words. By adding an identity state to $Q$, we may suppose
  $|u|=|v|=n$. Consider the graph with vertex set $Q^n$ and with an
  edge from $q_1\dots q_n$ to $t_1\dots t_n$ labeled `$(x_0,x_n)$'
  whenever there are edges from $q_i$ to $t_i$ labeled
  `$(x_{i-1},x_i)$' in $\Phi$ for all $i=1,\dots,n$. Then
  $\overline u=\overline v$ if and only if the following holds in this
  graph: the vertices $u,v$ may be identified, and outgoing edges with
  matching input may be identified, repeatedly, never causing an
  identification of vertices $y,z$ with different $\pi(y)\neq\pi(z)$.

  Since the graph is finite and every identification reduces its size,
  this proves that the word problem is decidable. By carefully
  arranging the order in which the graph is explored, this may be done
  in $\mathcal O(n)$ space.
\end{proof}

\begin{question}
  Is there an automaton for which the lower bound on the solution of
  the word problem is linear in space?
\end{question}

For every $n\in\N$, one may embed the matrix semigroup $M_n(\Z)$ into
an automaton semigroup. More precisely, consider the semigroup $G$ of
affine transformations $v\mapsto Av+w$, for all $A\in M_n(\Z)$ and
$w\in\Z^n$. Consider the alphabet $X=(\Z_{/2})^n$, and identify
$X^\infty$ with $n$-tuples of $2$-adics $(\Z_2)^n$. Let $G$ act on
$(\Z_2)^n$ by extending the natural action on $\Z$ by continuity. It
is easy to see that this makes $G$ a self-similar semigroup, and
furthermore every element of $G$ is contained in an automaton
subsemigroup of $G$.

This means that automaton semigroups are at least as powerful as
linear semigroups, and also shows that there exist automaton
semigroups with unsolvable conjugacy problem.

\begin{question}
  Do there exist automaton groups with unsolvable order problem?
\end{question}
It is known~\cite{gillibert:finiteness} that there exist automaton
semigroups with unsolvable order problem. This is proven by
Turing-reducing the order problem to a tiling problem.

\begin{definition}
  Let $\Phi\colon X\times Q\to Q\times X$ be a Mealy automaton. It is
  called \emph{bounded} if there is a constant $C$ such that, for all
  $n\in\N$, there are at most $C$ elements in
  $\Phi(X^n\times Q)\setminus(\{\one\}\times X^n)$.
\end{definition}
In terms of graphs describing the automaton, that condition says that,
apart from the identity state and its self-loops, there are no paths
in the automaton that follow more than one loop.

Bondarenko, Sidki and Zapata prove
in~\cite{bondarenko-b-s-z:conjugacy} that, if $G$ is an automaton
group generated by a bounded automaton, then the order problem is
solvable in $G$.

\begin{definition}
  Let $\Phi\colon X\times Q\to Q\times X$ be a Mealy automaton. It is
  called \emph{nuclear} if, for every $g\in Q^*$, there exists
  $n\in\N$ such that for all $u\in X^n$ we have
  $\overline{g@u}\in\overline Q$ as elements of $G(\Phi)$.
\end{definition}
In other words, for every $g\in G(\Phi)$, its action on all
remote-enough subtrees $uX^*$ may be described by elements of $Q$. An
automaton semigroup is called \emph{contracting} if it may be
presented by a nuclear automaton.

For instance, the Grigorchuk group $G_0$ is contracting, and the
automaton $\Phi$ presenting it is nuclear. On the other hand, the
semigroup $G_1$ is not contracting: any nuclear automaton presenting
it must contain the infinitely many distinct states $(rs)^n$ for all
$n\in\N$.

If an automaton $\Phi$ is nuclear, then this may be verified in finite
time: it suffices to check the condition of the definition for every
$g\in Q^2$.
\begin{question}
  Let $\Phi$ be an automaton. Is it decidable if $\Phi$ is nuclear? Or
  if the automaton $\Phi_{1,n}$ (on the stateset $Q^n$) is nuclear?
\end{question}

\begin{proposition}
  Let $\Phi$ be a nuclear automaton. Then the word problem in
  $G(\Phi)$ is solvable in polynomial time.
\end{proposition}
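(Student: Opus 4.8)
The plan is to turn the nuclearity hypothesis into a bound on the number of distinct ``states'' that can arise when we evaluate a word, and then run the word-problem algorithm from the previous proposition on the automaton $\Phi_{1,n}$ for a cleverly chosen $n$, rather than on the original $\Phi$. First I would make the nuclear condition effective: since $\Phi$ is nuclear, for every $g\in Q^2$ there is some $n_g$ with $\overline{g@u}\in\overline Q$ for all $u\in X^{n_g}$; taking $n:=\max_{g\in Q^2}n_g$ (a constant depending only on $\Phi$), one gets by induction on word length that for \emph{every} $g\in Q^*$ and every $u\in X^n$ the section $\overline{g@u}$ is represented by a single letter of $Q$. The inductive step uses the cocycle identity $(gh)@u=(g@u)(h@u^{g})$ together with the composition axiom for $\Phi$: a length-$\ell$ word's sections over $X^n$ are products of $\ell$ sections of letters, each of which is a letter, so after one more ``layer'' of depth $n$ the section of a product of two letters is again a letter. (One must be slightly careful: the naive bound gives sections that are words of length up to $\ell$ in $Q$; the point of nuclearity is precisely that these evaluate back into $\overline Q$, so one should phrase the induction in terms of the \emph{values} $\overline{g@u}$, not the formal words.)

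Next I would set up the decision procedure. Given input words $u,v\in Q^*$, I want to decide $\overline u=\overline v$, i.e.\ whether $u$ and $v$ act identically on $X^*$. Two self-similar transformations agree iff they have the same ``portrait'': the same permutation $\pi$ at the root, and, recursively, agreeing sections at every letter of $X$. By the effective nuclearity above, after descending $n$ levels every section of $\overline u$ (and of $\overline v$) lies in the finite set $\overline Q$. So the relevant data is finite: build the directed graph whose vertices are the elements of $\overline Q$ (at most $|Q|$ of them) together with a ``root layer'' consisting of the $\le |X|^n$ sections $\overline{u@w}$ and $\overline{v@w}$ for $w\in X^{\le n}$; edges record, for each $x\in X$, which vertex the $x$-section points to, and each vertex carries its root permutation $\pi$. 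Then $\overline u=\overline v$ iff, performing the identification-of-vertices-with-matching-sections closure exactly as in the proof of the earlier proposition (refusing any identification of two states with $\pi(y)\neq\pi(z)$), the vertices $u$ and $v$ get identified.

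The complexity count is then routine: computing the sections $\overline{u@w}$ for all $w\in X^{\le n}$ costs polynomial time --- there are $O(|X|^n)=O(1)$ many $w$, and each section of a length-$m$ word is computed by running the length-$m$ word through $n=O(1)$ transitions of $\Phi$ and then reducing the resulting bounded-length $Q$-word back to a letter using the precomputed multiplication/section tables of $\overline Q$, which takes time polynomial in $m=\max(|u|,|v|)$. The identification closure runs on a graph of size $O(|X|^n + |Q|)=O(m)$ and each round strictly shrinks it, so it terminates in polynomially many steps. I expect the main obstacle to be the bookkeeping in the inductive proof of effective nuclearity: one has to argue that \emph{all} sections at depth $n$ --- not just those of elements of $Q^2$ --- land in $\overline Q$, which requires combining the cocycle formula for sections of products with the fact that a product of two elements of $\overline Q$, though a priori a length-two word, is itself an element whose depth-$n$ sections are letters; making this uniform (one $n$ works for all words, and the reduction of section-words to letters is effective) is where the argument has to be done with care.
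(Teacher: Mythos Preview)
Your central inductive claim --- that a single fixed $n$ (depending only on $\Phi$) makes $\overline{g@u}\in\overline Q$ for \emph{every} $g\in Q^*$ and every $u\in X^n$ --- is false, and the induction you sketch does not establish it. If it held, the injective map $g\mapsto\big(g|_{X^n},\,(g@u)_{u\in X^n}\big)$ would send $G(\Phi)$ into the finite set $\sym(X^n)\times\overline Q^{\,|X|^n}$, forcing $G(\Phi)$ to be finite; yet the Grigorchuk automaton is nuclear and generates an infinite group. The flaw is exactly at the step you flagged as delicate: from ``each element of $Q^2$ contracts into $\overline Q$ at depth $n$'' you only obtain that a word of length $\ell$, grouped into $\lceil\ell/2\rceil$ pairs, has depth-$n$ sections whose \emph{values} are products of $\lceil\ell/2\rceil$ elements of $\overline Q$ --- not single letters. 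No amount of rephrasing in terms of values rather than formal words changes this.

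What nuclearity genuinely buys is a \emph{contraction}: there is $n$ with $|\overline{g@v}|\le\tfrac12(|g|+1)$ for all $g$ and all $v\in X^n$. The paper's proof uses exactly this. After a preprocessing step recording, for each $q_1q_2\in Q^2$ and each $v\in X^n$, a letter $q\in Q$ with $\overline{q_1q_2@v}=\overline q$, one compares $g,h\in Q^*$ by computing their $|X|^n$ depth-$n$ sections (now words of length at most $\tfrac12(|g|+1)$, $\tfrac12(|h|+1)$), checking that the top actions agree, and recursing. The recursion has depth $O(\log\max(|g|,|h|))$ and branching factor $|X|^n$, giving polynomial time of degree $n\log_2|X|$. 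Your identification-graph scheme can be repaired in the same spirit --- descend $O(n\log\ell)$ levels rather than $n$, so the ``root layer'' has polynomially many vertices (not $O(1)$) before the sections reach $\overline Q$ --- but as written the algorithm and its complexity bound are incorrect.
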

\begin{proof}
  Denote by $|\cdot|$ the word metric on $G(\Phi)$.  It follows from
  the definition that there is a constant $n$ such that
  $|\overline{g@v}|\le\frac12(|g|+1)$ for all $g\in G(\Phi),v\in
  X^n$. The preprocessing step is to compute which elements of $Q$ are
  equal in $G(\Phi)$, and to determine for each word $q_1q_2$ of
  length $2$ and each $v\in X^n$ an element $q\in Q$ such that
  $\overline{q_1q_2@v}=\overline q$.

  Then, given $g,h\in Q^*$, one computes $\#X^n$ words of length
  $\frac12(|g|+1),\frac12(|h|+1)$ representing the action on subtrees
  $u X^*$ for all $u\in X^n$; and compares them recursively. The
  complexity is polynomial of degree $n\log_2(\#X)$.
\end{proof}

\begin{question}
  Let $G(\Phi)$ be a contracting automaton semigroup. Is its torsion
  problem decidable?

  Let $G(\Phi)$ be a contracting automaton group. Is its conjugacy
  problem decidable?
\end{question}

We remarked in the introduction that the Grigorchuk group is an
infinite torsion group.
\begin{question}
  Is there an algorithm that, given an nuclear automaton $\Phi$,
  decides whether $G(\Phi)$ is infinite? Whether it is torsion?
\end{question}

\subsection{More constructions}
Let $G$ be a self-similar semigroup. We generalize the notation $q@x$
to arbitrary semigroup elements and words: consider a word $v\in X^*$
and an element $g\in G$; denote by $v^g$ the image of $v$ under
$g$. There is then a unique element of $G$, written $g@v$, with the
property
\[(v\,w)^g=(v^g)\,(w)^{g@v}\text{ for all }w\in X^*.
\]
We call by extension this element $g@v$ the \emph{state} of $g$ at
$v$; it is the state, in the Mealy automaton defining $g$, that is
reached from $g$ by following the path $v$ as input; thus in the
Grigorchuk automaton $b@1=a$ and $b@222=b$ and $(bc)@2=cd$. There is a
reverse construction: by $v*g$ we denote the transformation of $X^*$
(which need not belong to $G$) defined by
\[(v\,w)^{v*g}=v\,w^g,\qquad w^{v*g}=w\text{ if $w$ does not start with $v$}.
\]
Given a word $w=w_1\dots w_n\in X^*$ and a Mealy automaton $\Phi$ of
which $g$ is a state, it is easy to construct a Mealy automaton of which
$w*g$ is a state: add a path of length $n$ to $\Phi$, with input and
output $(w_1,w_1),\dots,(w_n,w_n)$ along the path, and ending at
$g$. Complete the automaton with transitions to the identity element. Then
the first vertex of the path defines the transformation $w*g$. For example,
here is $12*d$ in the Grigorchuk automaton:
\[\begin{fsa}[scale=0.8,every state/.style={minimum size=4.5ex}]
  \node[state] (b) at (1.4,3) {$b$};
  \node[state] (d) at (4.2,3) {$d$};
  \node[state,ellipse] (2d) at (7.0,3) {$2*d$};
  \node[state,ellipse] (12d) at (9.8,3) {$12*d$};
  \node[state] (c) at (2.8,0.7) {$c$};
  \node[state] (a) at (0,0) {$a$};
  \node[state] (e) at (5.6,0) {$\one$};
  \path (b) edge node[left,pos=0.66] {$(2,2)$} (c) edge node[left] {$(1,1)$} (a)
        (c) edge node[left,pos=0.66] {$(2,2)$} (d) edge node {$(1,1)$} (a)
        (d) edge node[above] {$(2,2)$} (b) edge node[right,pos=0.33] {$(1,1)$} (e)
        (a) edge[bend right=30] node[below] {$(1,2),(2,1)$} (e)
        (e) edge[loop right] node[below right] {$(1,1),(2,2)$} (e)
        (12d) edge node[above] {$(1,1)$} (2d) edge node [pos=0.33] {$(2,2)$} (e)
        (2d) edge node[above] {$(2,2)$} (d) edge node [right,pos=0.33] {$(1,1)$} (e);
      \end{fsa}
\]
Note the simple identities $(g@v_1)@v_2=g@(v_1v_2)$,
$(v_1v_2)*g=v_1*(v_2*g)$, and $(v*g)@v=g$. Recall that we write
conjugation in $G$ as $g^h=h^{-1}g h$. For any $h\in G$ we have
\begin{equation}\label{eq:twist}
  (v*g)^h=v^h*(g^{h@v}).
\end{equation}

An automaton semigroup is called \emph{regular weakly branched} if
there exists a non-trivial subsemigroup $K$ of $G$ such that for every
$v\in X^*$ the semigroup $v*K$ is contained in $K$, and therefore also
in $G$. Ab\'ert proved in~\cite{abert:nonfree} that regular weakly
branched groups satisfy no law.

\subsection{Grigorchuk's example}\label{ss:grigorchuk}
The first Grigorchuk group $G_0$, defined in Example~\ref{ex:grig}, is
an automaton group which appears prominently in group theory, for
example as a finitely generated infinite torsion
group~\cite{grigorchuk:burnside} and as a group of intermediate word
growth~\cite{grigorchuk:growth}. This section is not an introduction
to Grigorchuk's first group, but rather a brief description of it with
all information vital for the calculation in~\S\ref{ss:proof}. For
more details, see e.g.~\cite{bartholdi-g-s:bg}.

Fix the alphabet $X=\{1,2\}$. The first Grigorchuk group $G_0$ is a
permutation group of the set of words $X^*$, generated by the four
non-trivial states $a,b,c,d$ of the automaton given in
Example~\ref{ex:grig}. Alternatively, the transformations $a,b,c,d$
may be defined recursively as follows:
\begin{equation}\label{eq:grigorchuk}
\begin{alignedat}{2}
  (1x_2\dots x_n)^a&=2x_2\dots x_n, &\qquad (2x_2\dots x_n)^a&=1x_2\dots x_n,\\
  (1x_2\dots x_n)^b&=1(x_2\dots x_n)^a, &\qquad (2x_2\dots x_n)^b&=2(x_2\dots x_n)^c,\\
  (1x_2\dots x_n)^c&=1(x_2\dots x_n)^a, &\qquad (2x_2\dots x_n)^c&=2(x_2\dots x_n)^d,\\
  (1x_2\dots x_n)^d&=1x_2\dots x_n, &\qquad (2x_2\dots x_n)^d&=2(x_2\dots x_n)^b
\end{alignedat}
\end{equation}
which directly follow from $d@1=\one$, $d@2=b$, etc.

It is remarkable that most properties of $G_0$ derive from a careful
study of the automaton (or equivalently this action), usually using
inductive arguments. For example,
\begin{proposition}[\cite{grigorchuk:burnside}]\label{prop:torsion}
  The group $G_0$ is infinite, and all its elements have order a power of $2$.
\end{proposition}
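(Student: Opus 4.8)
The plan is to prove both claims by exploiting the self-similar structure of $G_0$ encoded in~\eqref{eq:grigorchuk}, treating infiniteness and torsion separately.

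\textbf{Infiniteness.} First I would show $G_0$ is infinite by producing elements of arbitrarily large order, or more cleanly by exhibiting a surjection from a finite-index subgroup onto $G_0$ itself. The key observation is that the stabilizer $\operatorname{Stab}(1)$ of the first letter consists exactly of the elements acting trivially on the top level; it has index $2$ (since $a$ swaps the two subtrees). The map $g\mapsto g@1$ sends $\operatorname{Stab}(1)$ into the transformations of the subtree $1X^*\cong X^*$. Reading off~\eqref{eq:grigorchuk}, one sees $b@1=a$, $c@1=a$, $d@1=\one$, and $a\notin\operatorname{Stab}(1)$; but the element $ada\in\operatorname{Stab}(1)$ has $ada@1=d@2=b$, and similarly $aba@1=c$, $aca@1=d$, while $b@1=a$. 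Hence the image of $\operatorname{Stab}(1)$ under $g\mapsto g@1$ contains $a,b,c,d$, so it is all of $G_0$. A finite group cannot map onto a proper quotient that surjects back onto it unless it is that quotient, and a short argument (the subtree map cannot be injective on a finite group once we also track the action on $2X^*$, which forces $b,c,d$ to have finite order while $\langle b,c,d\rangle$ would then be finite, contradicting that it maps onto $G_0$) yields a contradiction with finiteness. Cleanest is: if $G_0$ were finite of order $N$, then $\operatorname{Stab}(1)$ has order $N/2$ and surjects onto $G_0$ of order $N$, impossible.

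\textbf{Torsion.} For the order claim I would argue by induction on word length $|g|$ that every $g\in G_0$ has order a power of $2$. The base case $g\in\{a,b,c,d,\one\}$ is immediate: $a,b,c,d$ are involutions, as one checks from~\eqref{eq:grigorchuk}. For the inductive step, write $g$ as a word in $a,b,c,d$; using $a^2=b^2=c^2=d^2=\one$ and the relation $bcd=\one$ (visible from the automaton: $b,c,d$ pairwise multiply to the third), one may assume $g$ is reduced, i.e.\ alternates between $a$ and letters of $\{b,c,d\}$. If $g$ already lies in $\operatorname{Stab}(1)$ (even length in $a$), consider the pair $(g@1,g@2)$; each state $g@i$ is a product of the $b,c,d$-letters of $g$ appropriately conjugated, hence has length at most roughly $|g|/2+1$ — crucially strictly smaller once $|g|$ is large — so by induction each $g@i$ has $2$-power order, and then $g$ does too since the order of $g$ is the lcm of the orders of $g@1,g@2$ (as $g$ fixes the top level). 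If $g\notin\operatorname{Stab}(1)$, then $g$ has odd length in $a$; then $g^2\in\operatorname{Stab}(1)$ and one computes $(g^2)@1$ and $(g^2)@2$, each again of strictly smaller length than $g$ for $|g|$ large, so $g^2$ has $2$-power order and hence so does $g$. The one subtlety is the length bookkeeping when $g$ is short; there a finite case check (or the standard observation that $a$-syllables get absorbed) closes the gap.

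\textbf{Main obstacle.} The delicate point is the length-reduction estimate: one must verify that passing from $g$ to the pair $(g@1,g@2)$ (or from $g^2$ to its states) \emph{strictly} decreases the word length, so that the induction is well-founded. This requires the standard ``contraction'' lemma for the Grigorchuk automaton — essentially that $|g@i|\le(|g|+1)/2$ — which follows by tracking how each syllable of a reduced word contributes to the states, using $b@1=c@1=a$, $d@1=\one$, $b@2=c$, $c@2=d$, $d@2=b$, and $a@1=a@2=\one$ together with the top-level permutation. Everything else is routine manipulation with the relations $a^2=b^2=c^2=d^2=bcd=\one$.
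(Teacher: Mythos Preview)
The paper does not give its own proof of this proposition; it is stated with a citation to Grigorchuk's original article and used as a black box. Your sketch reproduces the standard argument, and both halves are essentially correct.

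The infiniteness part is clean and complete as written: the projection $g\mapsto g@1$ is a homomorphism on $\operatorname{Stab}(1)$, your computations show its image contains $a,b,c,d$, and the cardinality contradiction finishes it.

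For torsion, your outline is the classical one and you have correctly isolated the real issue. One warning: the inequality $|g@i|\le(|g|+1)/2$ by itself is \emph{not} enough once you first pass to $g^2$ in the non-stabilizer case, since then $|(g^2)@i|$ is only bounded by roughly $|g|$, not by something strictly smaller. The standard remedy is a short case split on whether the reduced word for $g$ contains the letter $d$ (if it does, a $\one$ appears among the states and the length drops; if it does not, the state at $1$ collapses to a power of $a$, and one iterates once more on the state at $2$, which is now a word in $\{c,d\}$ only). Your phrase ``a finite case check closes the gap'' is accurate but hides this step; when you write it out, make the case analysis explicit rather than appealing to the contraction bound alone.
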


\noindent The self-similar nature of $G_0$ is made apparent in the
following manner:
\begin{proposition}[\cite{bartholdi-g:parabolic}*{\S4}]\label{prop:branch}
  Define $x=[a,b]$ and $K=\langle x,x^{c},x^{ca}\rangle$. Then $K$ is
  a normal subgroup of $G_0$ of index $16$, and $\psi(K)$ contains
  $K\times K$.

  In other words, for every $g\in K$ and every $v\in X^*$ the element
  $v*g$ belongs to $G_0$.
\end{proposition}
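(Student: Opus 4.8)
The strategy is the standard two-step argument for establishing weak branching of Grigorchuk's group: first show $K$ is normal of the right index by a direct computation in the finite quotient $G_0/\mathrm{Stab}(X^k)$ for small $k$, and then exhibit explicit preimages under $\psi$ witnessing $K\times K\subseteq\psi(K)$. Here I write $\psi\colon\mathrm{Stab}_{G_0}(X)\to G_0\times G_0$ for the map $g\mapsto(g@1,g@2)$, so that $\psi$ is an injective homomorphism on the stabilizer of the first level. Since $a$ swaps the two subtrees, one has $a^2=\one$ and $[a,b]\in\mathrm{Stab}(X)$, so $x=[a,b]$ and its conjugates $x^c,x^{ca}$ indeed lie in the domain of $\psi$; this is the first thing to check.

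First I would compute $\psi(x)$. From~\eqref{eq:grigorchuk} one reads off $b=(a,c)$ and $a=\sigma$ (the nontrivial root permutation), hence $[a,b]$ can be pushed through $\psi$ to give $\psi(x)=(a c,(a c)^{-1})=(ac,ca)$ — up to bookkeeping about which generator lands in which coordinate, which I would nail down by carefully applying the recursion to a word $1w$ and a word $2w$. Iterating, $\psi(x^c)$ and $\psi(x^{ca})$ are obtained by conjugating coordinate-wise using $c=(a,d)$ and the twisting rule, so each of $x,x^c,x^{ca}$ has first and second coordinates lying in the finite set $\{ac,ca,\dots\}$ of short words. A short check then shows that the subgroup of $G_0\times G_0$ generated by $\psi(x),\psi(x^c),\psi(x^{ca})$ surjects onto a specified finite group (in fact one recognizes $ac$ and $ca$ as, up to conjugacy, the generators of a dihedral-type subgroup), and in particular, combining with suitable products, one produces elements of $\psi(K)$ of the form $(k,\one)$ and $(\one,k)$ for $k$ ranging over generators of $K$. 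That gives $K\times\{\one\}$ and $\{\one\}\times K$ inside $\psi(K)$, hence $K\times K\subseteq\psi(K)$.

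For normality and the index: $K$ is generated by $x$ and two of its conjugates, so to see $K\trianglelefteq G_0$ it suffices to check that conjugation by each of $a,b,c,d$ permutes (modulo $K$) the three generators $x,x^c,x^{ca}$; since $G_0/\mathrm{Stab}(X^3)$ is finite and $x\in\mathrm{Stab}(X^2)$, this is a finite verification, most cleanly done by computing in the finite quotient $G_0/K$ directly. To pin the index at $16$ I would exhibit $G_0/K$ explicitly: the images of $a,b,c,d$ generate a group of order $16$ (one checks the relations $a^2=b^2=c^2=d^2=bcd=\one$ together with the commutation relations forced modulo $K$, and that no further collapse occurs, by examining the action on $X^3$). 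The normality of $K$ also makes the statement ``$v*K\subseteq K$ for all $v$'' follow formally: $K\times K\subseteq\psi(K)$ gives $1*K\subseteq K$ and $2*K\subseteq K$ (reading off coordinates and using that $\psi$ is injective on $\mathrm{Stab}(X)$), and then induction on $|v|$ together with $(v_1v_2)*g=v_1*(v_2*g)$ finishes it; the final clause of the proposition is then immediate since $v*K\subseteq K\subseteq G_0$.

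The main obstacle is purely computational bookkeeping: getting the coordinates of $\psi(x)$, $\psi(x^c)$, $\psi(x^{ca})$ exactly right (including the order of the two factors and the direction of the conjugating elements via~\eqref{eq:twist}), and then verifying that these three pairs really do generate a subgroup of $G_0\times G_0$ large enough to contain $K\times K$ — equivalently, checking that the relevant $16$-element quotient is not accidentally smaller. There is no conceptual difficulty once $\psi(x)=(ac,ca)$ (or its mirror) is established; everything else is a finite check in $G_0/\mathrm{Stab}(X^3)$, which can be carried out by hand or, as the paper's footnote suggests, confirmed with \textsc{Gap}.
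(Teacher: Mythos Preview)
The paper does not prove this proposition at all: it is stated with a citation to \cite{bartholdi-g:parabolic}*{\S4} and used as a black box in~\S\ref{ss:proof}. So there is no in-text argument to compare against; your outline is precisely the standard proof found in the cited reference and in expositions such as~\cite{bartholdi-g-s:bg}.

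A couple of small remarks on the sketch itself. First, your computation $\psi(x)=(ca,ac)$ is correct (with the paper's convention $b=(a,c)$, $c=(a,d)$, $d=(\one,b)$), and from there the key explicit identity one actually uses is something like $\psi\big((x^2)^{(ca)^{-1}}\big)=(x,\one)$ or an equivalent; merely observing that the coordinates of $\psi(x),\psi(x^c),\psi(x^{ca})$ are short words is not by itself enough to conclude $K\times K\subseteq\psi(K)$, so you will want to write down the specific products that yield $(x,\one)$, $(x^c,\one)$, $(x^{ca},\one)$ and their mirrors. Second, your description of $G_0/K$ needs adjusting: since $x=[a,b]\in K$ but $[G_0:K]=16>8=[G_0:G_0']$, the quotient $G_0/K$ is \emph{not} abelian --- $a$ still fails to commute with (say) $d$ modulo $K$ --- so the ``commutation relations forced modulo $K$'' are fewer than your wording suggests, and the action on $X^3$ is indeed what distinguishes the $16$ cosets. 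None of this affects the structure of your argument, which is the right one.
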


\section{Engel Identities}
In this section, we restrict ourselves to invertible Mealy automata
and self-similar groups.

A \emph{law} in a group $G$ is a word $w=w(x_1,x_2,\dots,x_n)$ such
that $w(g_1,\dots,g_n)=\one$, the identity element, for all
$g_1,\dots,g_n\in G$; for example, commutative groups satisfy the law
$[x_1,x_2]=x_1^{-1}x_2^{-1}x_1x_2$. A \emph{variety} of groups is a
maximal class of groups satisfying a given law; e.g.\ the variety of
commutative groups (satisfying $[x_1,x_2]$) or of groups of exponent
$p$ (satisfying $x_1^p$);
see~\cites{neumann:varieties,straubing-weil:varieties}.

Consider now a sequence $\mathscr W=(w_0,w_1,\dots)$ of words in $n$
letters. Say that $(g_1,\dots,g_n)$ \emph{almost satisfies}
$\mathscr W$ if $w_i(g_1,\dots,g_n)=1$ for all $i$ large enough, and
say that $G$ \emph{almost satisfies $\mathscr W$} if all $n$-tuples
from $G$ almost satisfy $\mathscr W$. For example, $G$ almost
satisfies $(x_1,\dots,x_1^{i!},\dots)$ if and only if $G$ is a torsion
group.

The problem of deciding algorithmically whether a group belongs to a
given variety has received much attention~(see
e.g.~\cite{jackson:undecidable} and references therein); we consider
here the harder problems of determining whether a group (respectively
a tuple) almost satisfies a given sequence. This has, up to now, been
investigated mainly for the torsion sequence
above~\cite{godin-klimann-picantin:torsion-free}.

The Engel law is
\[E_c=E_c(x,y)=[x,y,\dots,y]=[\cdots[[x,y],y],\dots,y]\]
with $c$ copies of `$y$'; so $E_0(x,y)=x$, $E_1(x,y)=[x,y]$ and
$E_c(x,y)=[E_{c-1}(x,y),y]$. See below for a motivation. Let us call a
group (respectively a pair of elements) \emph{Engel} if it almost
satisfies $\mathscr E=(E_0,E_1,\dots)$.  Furthermore, let us call
$h\in G$ an \emph{Engel} element if $(g,h)$ is Engel for all $g\in G$.

\noindent A concrete consequence of our investigations is:
\begin{theorem}\label{thm:1}
  The first Grigorchuk group $G_0$ is not Engel. Furthermore, an
  element $h\in G_0$ is Engel if and only if $h^2=\one$.
\end{theorem}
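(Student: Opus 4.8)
The plan is to split the statement into two parts: (a) if $h^2=\one$ then $h$ is an Engel element, and (b) if $h^2\neq\one$ then $(g,h)$ fails to be Engel for some $g$, which in particular (taking $g=a$, say) shows $G_0$ is not Engel.

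For part (a), the key observation is that elements of order $2$ in $G_0$ are quite restricted. First I would recall that $G_0/\mathrm{St}_{G_0}(1)\cong\Z/2$, and that an element $h$ with $h^2=\one$ either lies in the stabilizer $\mathrm{St}_{G_0}(1)$ (so $\psi(h)=(h_1,h_2)$ with $h_1^2=h_2^2=\one$ in $G_0$) or acts as $a$ on the first level (so $\psi(h)=(h_1,h_2)a$ with $h_1h_2$ of finite order forcing, after a short computation, $h_1h_2=\one$, i.e.\ $h_2=h_1^{-1}$). In either case one shows by induction on word length that for any $g\in G_0$ the iterated commutator $E_c(g,h)$ has all its first-level states equal to iterated commutators $E_{c'}(g',h')$ with $h'$ again an involution and $\lvert g'\rvert$ bounded; since the contraction (Proposition~\ref{prop:branch} style estimate, or just the nuclearity of $\Phi$) drives word lengths down, after finitely many levels all states lie among finitely many involutions, and a direct finite check — the order problem being solvable here — shows $E_c=\one$ for $c$ large. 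The cleanest route is probably: the set of pairs $\{(g,h):h^2=\one\}$ is, up to the finite-state reduction, closed under $g\mapsto g@x$, $h\mapsto h@x$ and has bounded "non-trivial part"; so the orbit of any such pair under taking states is finite, and on a finite set the descending commutator sequence $E_c$ is eventually periodic, hence (since $h$ normalizes and $[x,h,h,\dots]$ lands in smaller and smaller subgroups $\gamma_c$) eventually $\one$.

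For part (b), I would use the partial Engel-disproving algorithm described in the paper: exhibit an explicit pair, the natural candidate being $(a,t)$ where $t$ is an element of order $4$ such as $t=(ad)^{?}$ — more precisely one takes $h$ to be any element with $h^2\neq\one$, writes $h=h'r$ with $r$ of order $2$ and $h'$ in a fixed finite set of "small" elements of order $>2$, and reduces to showing a single well-chosen pair is not Engel. Concretely I expect the paper to produce an element $y$ of order $4$ (e.g.\ $y$ with $\psi(y)=(a,c)$ or similar, living in $K$) and show, using the self-replicating structure $\psi(K)\supseteq K\times K$ from Proposition~\ref{prop:branch}, that the map $x\mapsto[x,y]$ does \emph{not} attract to $\{1\}$: one finds $x\in K$ and a coordinate $v\in X^*$ with $E_c(x,y)@v$ conjugate to $E_{c}(x,y)$ (or to a shift thereof) up to a bounded error, so the sequence $E_c(x,y)$ cannot die. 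This is exactly where the implemented computation enters; by hand one would verify a finite "non-vanishing certificate": a finite portion of the state graph of the family $\{E_c(x,y)\}$ that contains a cycle avoiding $\one$.

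The main obstacle is part (b): proving a \emph{negative} Engel statement is not a finite check in principle, and the honest argument must package the computer verification into a finitary certificate. The device that makes it finitary is branchness — because $v*K\subseteq K$, a single element $y\in K$ of order $>2$ together with a suitable $x$ produces, among the states of $E_c(x,y)$, a "renormalized copy" of an earlier iterated commutator, giving an infinite descending-but-never-trivial sequence. Identifying the right $x,y$ and the right coordinate $v$ — and checking that the renormalization genuinely reproduces a nontrivial term rather than collapsing — is the crux; everything else (part (a), and the reduction of the general $h$ to finitely many cases) is a contraction-plus-induction argument of the kind already used throughout the study of $G_0$.
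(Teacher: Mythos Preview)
Your part~(a) takes a much harder road than necessary and, as written, does not go through. The paper's argument is a one-liner: if $h^2=\one$ then conjugation by $h$ inverts $[g,h]$ (indeed $[g,h]^h=h^{-1}g^{-1}h^{-1}gh\cdot h=hg^{-1}hg=[g,h]^{-1}$), so $E_{k+1}(g,h)=[g,h]^{(-2)^k}$, which vanishes for large $k$ because $G_0$ is $2$-torsion. Your contraction/induction scheme rests on the claim that the first-level states of $E_c(g,h)$ are again of the form $E_{c'}(g',h')$ with $h'$ an involution; this is false already for $c=1$. For instance, if $h=(h_1,h_2)$ fixes the first level and $g=(g_1,g_2)a$ swaps it, a direct computation gives $[g,h]@1=[g_2,h_2]\,h_2^{-1}h_1$, which is not of the asserted shape. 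Without that recursion there is nothing to induct on.

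For part~(b) you have the right architecture --- a finitary non-vanishing certificate produced from a cycle together with branchness --- but two features of the actual argument are missing. First, there is no reduction ``$h=h'r$'' to a single test pair; the paper instead finds, by computer (Lemma~\ref{lem:calculation}), a cycle of the commutator dynamics on \emph{quadruples}: an explicit $A_0=(A_{0,1},\dots,A_{0,4})\in K^4$ such that after nine iterations of $(k_1,\dots,k_4)\mapsto(k_1^{-1}k_2,\dots,k_4^{-1}k_1)$ and one state-extraction at the word $111112$ one recovers $A_0$, with all entries nontrivial. Second, this \emph{single} cycle serves every $h$ with $h^2\neq\one$: such $h$ has order $2^e\ge4$, so one picks an $h$-orbit $\{v_1,\dots,v_{2^e}\}$ on some $X^n$ and, using Proposition~\ref{prop:branch}, sets $g=\prod_{i}v_i*A_{0,i\%4}^{h_i}$ for suitable correcting elements $h_i$. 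Then $E_c(g,h)=\prod_i v_i*A_{c,i\%4}^{h_i}$ simply reads off the cycle, hence never trivializes. The conceptual point you are missing is that the certificate lives in $K^4$ (quadruples, matching the minimal relevant order $4$), not in a single pair $(x,y)$; this is exactly what lets one computation handle all $h$ at once.
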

We prove a similar statement for another prominent example of automaton
group, the \emph{Gupta-Sidki group}, see Theorem~\ref{thm:gs}.

Theorem~\ref{thm:1} follows from a partial algorithm, giving a
criterion for an element $y$ to be Engel. This algorithm proves, in
fact, that the element $ad$ in the Grigorchuk group is not Engel. Our
aim is to solve the following \textbf{decision problems} in an
automaton group $G$:
\begin{description}
\item[Engel($g,h$)] Given $g,h\in G$, does there exist $c\in\N$ with
  $E_c(g,h)=\one$?
\item[Engel($h$)] Given $h\in G$, does Engel($g,h$) hold for all $g\in G$?
\end{description}

\noindent The algorithm is described in~\S\ref{ss:algo}. As a consequence,
\begin{corollary}
  Let $G$ be an automaton group acting on the set of binary sequences
  $\{1,2\}^*$, that is contracting with contraction coefficient
  $\eta<1$. Then, for torsion elements $h$ of order $2^e$ with
  $2^{2^e}\eta<1$, the property Engel($h$) is decidable.
\end{corollary}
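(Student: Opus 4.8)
The plan is to turn the Engel($h$) question into a finite search, using the contraction hypothesis to bound how far one must descend the tree before the iterated commutators stabilize. Fix $h$ of order $2^e$, so that $E_c(g,h)$ is a product of at most $2^{2^e}$ conjugates of $g^{\pm1}$ by powers of $h$ (writing out $E_c=[E_{c-1},h]=E_{c-1}^{-1}E_{c-1}^h$ and iterating, the word length in $g$ doubles at each step but the exponent sum of the $h$-conjugating words stays bounded because $h$ has order $2^e$). Consequently $|E_c(g,h)@u|$, for $u\in X^n$ with $X=\{1,2\}$, is at most $2^{2^e}\eta^n\,|E_{c-1}(g,h)|$ up to an additive constant coming from the nucleus; since $2^{2^e}\eta<1$ by assumption, choosing $n=1$ already contracts. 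This is the crucial point: descending one level multiplies the relevant length by a factor $<1$, so the sequence $c\mapsto E_c(g,h)$ cannot grow and must eventually be supported on a bounded set of states.

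Concretely, I would first run the preprocessing of the contracting word problem (Proposition on nuclear automata) to get the nucleus $\mathcal N$ and the level $n=1$ at which $|g@u|\le\eta|g|+O(1)$. Then I claim Engel($h$) is equivalent to a statement quantified only over the finite set $\{g\in G:|g|\le R\}$ for an explicit radius $R$ depending on $\mathcal N$, $\eta$ and $2^e$: namely that $E_c(g,h)=\one$ for some $c\le C_0$, with $C_0$ also explicit. The forward direction (if Engel($h$) holds then these finitely many instances hold) is trivial. For the converse, take an arbitrary $g$; by the contraction estimate, for $u\in X$ the pair $(g@u, h@u)$ lives in a group of the same type (the automaton restricted to the states reachable below $h$ still has order dividing $2^e$ at each coordinate, and the contraction coefficient only improves), and $E_c(g,h)@u = E_c(g@u,h@u)$ because $@u$ is a semigroup homomorphism on stabilizers — so one sets up an induction on $|g|$ (using the well-founded order given by the length drop) to reduce to the base case $|g|\le R$, where the finite check applies. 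One also needs the elementary fact that $E_c(g,h)$ acts trivially on level $1$ once $c$ is large enough, which follows because $h$ permutes the two subtrees through a finite-order permutation and commutators of such eventually fix the top level; this lets us pass freely between $g$ and its sections.

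The main obstacle is making the induction on $|g|$ genuinely terminate and uniform in $c$. The subtlety is that $E_c(g,h)$ is not literally $g$ conjugated and multiplied — its \emph{word length in $Q$} grows like $2^c$, so one cannot just say ``$g@u$ is shorter than $g$'' for the iterated commutator itself. The fix is to track instead the length of $g$ alone (not of $E_c(g,h)$) and observe that $E_c(g,h)@u$ is expressible as $E_c(g@u', h@u'')$-type data built from sections of $g$ at level $n$, each of length $\le\eta|g|+O(1)$, together with the finitely many sections of $h$; so the recursion is on $|g|$ with the commutator depth $c$ and the element $h$ carried along as bounded parameters. Once the length of $g$ drops below the threshold $R$ where even the worst-case blow-up $2^{2^e}$ keeps us inside a pre-tabulated finite set of elements, the finite search for a witnessing $c\le C_0$ decides the instance; and the estimate $2^{2^e}\eta<1$ is exactly what guarantees $R$ is finite. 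Bookkeeping the dependence of $C_0$ on the nucleus is routine but tedious, and I would state it as a function of $|\mathcal N|$ and $e$ without optimizing.
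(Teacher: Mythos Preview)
Your sketch has two genuine gaps, and both strike at the heart of the argument.

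First, the opening count is wrong: $E_c(g,h)=E_{c-1}^{-1}E_{c-1}^h$ doubles the number of $g^{\pm1}$-factors at every step, so $E_c(g,h)$ is a product of $2^c$ conjugates of $g^{\pm1}$, not of $2^{2^e}$. The fact that $h$ has only $2^e$ distinct powers does not let you collect these non-commuting factors. So there is no a~priori uniform bound on $|E_c(g,h)|$ in terms of $|g|$ alone, and your contraction estimate ``$|E_c(g,h)@u|\le 2^{2^e}\eta\,|E_{c-1}(g,h)|$'' has no basis.

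Second, and more seriously, the identity $E_c(g,h)@u=E_c(g@u,h@u)$ fails. The map $g\mapsto g@u$ is a homomorphism only on the stabiliser of $u$; an element $h$ of order $2^e\ge2$ typically acts non-trivially on the first level, so $[g,h]@u$ mixes sections of $g$ at \emph{both} children with sections of $h$. Even when $h$ does fix $u$, you get $E_c(g@u,h@u)$ with $h@u\neq h$, so your ``induction on $|g|$ with $h$ fixed'' recurses into Engel problems with a different, uncontrolled second argument. The paragraph where you try to patch this (``expressible as $E_c(g@u',h@u'')$-type data'') is exactly where the argument collapses: there is no such clean decomposition.

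The paper sidesteps both issues by a change of variables that eliminates $h$ from the dynamics. One tracks, instead of $g$, the $n$-tuple $\bar g=(g,g^h,\dots,g^{h^{n-1}})$ with $n=2^e$. Then replacing $g$ by $[g,h]$ corresponds to the purely combinatorial map $\bar g\mapsto(g_1^{-1}g_2,\dots,g_n^{-1}g_1)$, which doubles lengths but no longer involves $h$. A short computation in $\mathbb F_2[S]/(S^n-1)$ shows $(S-\one)^n=0$, so after at most $n$ iterations every coordinate fixes the first level and one may pass to sections, contracting by~$\eta$. The net factor over $n$ steps is $2^n\eta<1$; hence every trajectory eventually enters a fixed ball $B(R)^n$ with $R=2^nC/(1-2^n\eta)$, and Engel($h$) reduces to detecting whether the only cycle in that finite graph is the trivial one. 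This is where the $2^{2^e}$ genuinely enters, not in a bound on the number of conjugates.
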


The Engel property attracted attention for its relation to nilpotency:
indeed a nilpotent group of class $c$ satisfies $E_c$, and conversely
among compact~\cite{medvedev:engel} and
solvable~\cite{gruenberg:engel} groups, if a group satisfies $E_c$ for
some $c$ then it is locally nilpotent. Conjecturally, there are
non-locally nilpotent groups satisfying $E_c$ for some $c$, but this
is still unknown. It is also an example of iterated identity,
see~\cites{erschler:iteratedidentities,bandman+:dynamics}. In
particular, the main result of~\cite{bandman+:dynamics} implies easily
that the Engel property is decidable in algebraic groups.

It is comparatively easy to prove that the first Grigorchuk group $G_0$
satisfies no law~\cites{abert:nonfree,leonov:identities}; this result
holds for a large class of automaton groups. In fact, if a group
satisfies a law, then so does its profinite completion. In the class
mentioned above, the profinite completion contains abstract free
subgroups, precluding the existence of a law. No such arguments would
help for the Engel property: the restricted product of all finite
nilpotent groups is Engel, but the unrestricted product again contains
free subgroups. This is one of the difficulties in dealing with
iterated identities rather than identities.

If $\mathfrak A$ is a nil algebra (namely, for every $a\in\mathfrak A$
there exists $n\in\N$ with $a^n=0$) then the set of elements of the
form $\{\one+a:a\in\mathfrak A\}$ forms a group $1+\mathfrak A$ under the
law $(\one+a)(\one+b)=\one+(a+b+ab)$. If $\mathfrak A$ is defined over a field
of characteristic $p$, then $\one+\mathfrak A$ is a torsion group since
$(\one+a)^{p^n}=\one$ if $a^{p^n}=0$. Golod constructed
in~\cite{golod:burnside} non-nilpotent nil algebras $\mathfrak A$ all
of whose $2$-generated subalgebras are nilpotent (namely,
$\mathfrak A^n=0$ for some $n\in\N$); given such an $\mathfrak A$, the
group $\one+\mathfrak A$ is Engel but not locally nilpotent.

Golod introduced these algebras as means of obtaining infinite,
finitely generated, residually finite (every non-trivial element in
the group has a non-trivial image in some finite quotient), torsion
groups. Golod's construction is highly non-explicit, in contrast with
Grigorchuk's group for which much can be derived from the automaton's
properties.

It is therefore of high interest to find explicit examples of Engel
groups that are not locally nilpotent, and the methods and algorithms
presented here are a step in this direction.

In the remainder of this text, we concentrate on the Engel property,
which is equivalent to nilpotency for finite groups. In particular, if
an automaton group $G$ is to have a chance of being Engel, then its
image under the map $\pi\colon G\to\sym(X)$ should be a nilpotent
subgroup of $\sym(X)$. Since finite nilpotent groups are direct
products of their $p$-Sylow subgroups, we may reduce to the case in
which the image of $G$ in $\sym(X)$ is a $p$-group. A further
reduction lets us assume that the image of $G$ is an abelian subgroup
of $\sym(X)$ of prime order. We therefore make the following
\begin{assumption}
  The alphabet is $X=\{1,\dots,p\}$ and automaton groups $G(\Phi)$ are
  generated by automata $\Phi\colon X\times Q\to Q\times X$ such that
  for every $q\in Q$ the corresponding map $\pi(q)\in\sym(X)$
  describing the action of $q$ on $X$ takes values in the cyclic
  subgroup $\Z_{/p}$ of $\sym(X)$ generated by the cycle
  $(1,2,\dots,p)$.
\end{assumption}

We make a further reduction in that we only consider the Engel
property for elements of finite order. This is not a very strong
restriction: given $h$ of infinite order, one can usually find an
element $g\in G$ such that the conjugates $\{g^{h^n}:n\in\Z\}$ are
independent, and it then follows that $h$ is not Engel. We content
ourselves with an example:
\begin{example}[The Brunner-Sidki-Vieira group~\cite{brunner-s-v:nonsolvable}]
  The generating set is $Q=\{\tau^{\pm1},\mu^{\pm1},\one\}$ and the
  alphabet is $X=\{1,2\}$. The self-similarity structure is given by
  \begin{xalignat*}{5}
    \autsq\Phi \tau12\one && \autsq\Phi \tau21\tau && \autsq\Phi \mu12\one && \autsq\Phi \mu21{\mu^{-1}}.
  \end{xalignat*}
  Let $G_2$ denote the group generated by $Q$. The elements $\tau,\mu$
  have infinite order, and in fact act transitively on $X^n$ for all
  $n$.

  Let us show that $(\mu,\tau)$ is not an Engel pair, namely
  $E_c(\mu,\tau)\neq\one$ for all $c\in\N$. We rely on the
  calculations in~\cite{bartholdi:bsvlcs}, which compute the lower
  $2$-central series of $G_2$, namely the series of subgroups
  $\delta_1=G_2$ and
  $\delta_{n+1}=[\delta_n,G_2]\{g^2:g\in \delta_n\}$ for for all
  $n\ge1$. In that article, a basis of the $\mathbb F_2$-vector space
  $\delta_n/\delta_{n+1}$ is given for all $n$, and in particular one
  of the basis vectors of $\delta_n/\delta_{n+1}$ is
  $E_{n-1}(\mu,\tau)$.
\end{example}

\section{A semi-algorithm for deciding the Engel property}\label{ss:algo}

We start by describing a semi-algorithm to check the Engel property. It
will sometimes not return any answer, but when it returns an answer then that
answer is guaranteed correct. It is guaranteed to terminate as long as the
contraction property of the automaton group $G$ is strong enough.

\begin{algorithm}\label{algo:1}
  Let $G$ be a contracting automaton group with alphabet
  $X=\{1,\dots,p\}$ for prime $p$, with the contraction property
  $\|g@j\|\le\eta\|g\|+C$.

  For $n\in p\N$ and $R\in\R$ consider the following finite graph
  $\Gamma_{n,R}$. Its vertex set is $B(R)^n\cup\{\texttt{fail}\}$,
  where $B(R)$ denotes the set of elements of $G$ of length at most
  $R$. Its edge set is defined as follows: consider a vertex
  $(g_1,\dots,g_n)$ in $\Gamma_{n,R}$, and compute
  \[(h_1,\dots,h_n)=(g_1^{-1}g_2,\dots,g_n^{-1}g_1).\]
  If $h_i$ fixes $X$ for all $i$, i.e.\ all $h_i$ have trivial image
  in $\sym(X)$, then for all $j\in\{1,\dots,p\}$ there is an edge from
  $(g_1,\dots,g_n)$ to $(h_1@j,\dots,h_n@j)$, or to \texttt{fail} if
  $(h_1@j,\dots,h_n@j)\not\in B(R)^n$. If some $h_i$ does not fix $X$,
  then there is an edge from $(g_1,\dots,g_n)$ to $(h_1,\dots,h_n)$,
  or to \texttt{fail} if $(h_1,\dots,h_n)\not\in B(R)^n$.
  \begin{description}
  \item[\boldmath Given $g,h\in G$ with $h^n=\one$:] Set
    $t_0=(g,g^h,g^{h^2},\dots,g^{h^{n-1}})$. If there exists $R\in\N$
    such that no path in $\Gamma_{n,R}$ starting at $t_0$ reaches
    \texttt{fail}, then Engel($g,h$) holds if and only if the only
    cycle in $\Gamma_{n,R}$ reachable from $t_0$ passes through
    $(\one,\dots,\one)$.

    If the contraction coefficient satisfies $2^n\eta<1$, then it is
    sufficient to consider $R=(\|g\|+n\|h\|)2^n C/(1-2^n\eta)$.
  \item[\boldmath Given $n\in\N$:] The Engel property holds for all
    elements of exponent $n$ if and only if, for all $R\in\N$, the
    only cycle in $\Gamma_{n,R}$ passes through $(\one,\dots,\one)$.

    If the contraction coefficient satisfies $2^n\eta<1$, then it is
    sufficient to consider $R=2^n C/(1-2^n\eta)$.
  \item[\boldmath Given $G$ weakly branched and $n\in\N$:] If for some
    $R\in\N$ there exists a cycle in $\Gamma_{n,R}$ that passes
    through an element of $K^n\setminus\one^n$, then no element of $G$
    whose order is a multiple of $n$ is Engel.

    If the contraction coefficient satisfies $2^n\eta<1$, then it is
    sufficient to consider $R=2^n C/(1-2^n\eta)$.
  \end{description}
\end{algorithm}

We consider the graphs $\Gamma_{n,R}$ as subgraphs of a graph
$\Gamma_{n,\infty}$ with vertex set $G^n$ and same edge definition as the
$\Gamma_{n,R}$.

We note first that, if $G$ satisfies the contraction condition
$2^n\eta<1$, then all cycles of $\Gamma_{n,\infty}$ lie in fact in
$\Gamma_{n,2^n C/(1-2^n\eta)}$. Indeed, consider a cycle passing
through $(g_1,\dots,g_n)$ with $\max_i\|g_i\|=R$. Then the cycle
continues with $(g^{(1)}_1,\dots,g^{(1)}_n)$,
$(g^{(2)}_1,\dots,g^{(2)}_n)$, etc.\ with $\|g_i^{(k)}\|\le2^k R$; and
then for some $k\le n$ we have that all $g_i^{(k)}$ fix $X$; namely,
they have a trivial image in $\sym(X)$, and the map $g\mapsto g@j$ is
an injective homomorphism on them. Indeed, let
$\pi_1,\dots,\pi_n,\pi^{(i)}_1,\dots,\pi^{(i)}_n\in\Z_{/p}\subset\sym(X)$
be the images of $g_1,\dots,g_n,g^{(i)}_1,\dots,g^{(i)}_n$
respectively, and denote by $S\colon\Z_{/p}^n\to\Z_{/p}^n$ the cyclic
permutation operator. Then
$(\pi^{(n)}_1,\dots,\pi^{(n)}_n)=(S-1)^n(\pi_1,\dots,\pi_n)$, and
$(S-\one)^n=\sum_j S^j\binom n j=0$ since $p|n$ and $S^n=\one$. Thus there is
an edge from $(g^{(k)}_1,\dots,g^{(k)}_n)$ to
$(g^{(k+1)}_1@j,\dots,g^{(k+1)}_n@j)$ with
$\|g^{(k+1)}_i@j\|\le\eta\|g^{(k)}_i\|+C\le\eta2^n R+C$. Therefore, if
$R>2^n C/(1-2^n\eta)$ then $2^n\eta R+C<R$, and no cycle can return to
$(g_1,\dots,g_n)$.

Consider now an element $h\in G$ with $h^n=\one$. For all $g\in G$, there
is an edge in $\Gamma_{n,\infty}$ from $(g,g^h,\dots,g^{h^{n-1}})$ to
$([g,h]@v,[g,h]^h@v,[g,h]^{h^{n-1}}@v)$ for some word
$v\in\{\varepsilon\}\sqcup X$, and therefore for all $c\in\N$ there
exists $d\le c$ such that, for all $v\in X^d$, there is a length-$c$
path from $(g,g^h,\dots,g^{h^{n-1}})$ to
$(E_c(g,h)@v,\dots,E_c(g,h)^{h^{n-1}}@v)$ in $\Gamma_{n,\infty}$.

We are ready to prove the first assertion: if Engel($g,h$), then
$E_c(g,h)=\one$ for some $c$ large enough, so all paths of length $c$ starting
at $(g,g^h,\dots,g^{h^{n-1}})$ end at $(\one,\dots,\one)$. On the other hand, if
Engel($g,h$) does not hold, then all long enough paths starting at
$(g,g^h,\dots,g^{h^{n-1}})$ end at vertices in the finite graph
$\Gamma_{n,2^n C/(1-2^n\eta)}$ so must eventually reach cycles; and one of
these cycles is not $\{(\one,\dots,\one)\}$ since $E_c(g,h)\neq\one$ for all $c$.

The second assertion immediately follows: if there exists $g\in G$
such that Engel($g,h$) does not hold, then again a non-trivial cycle
is reached starting from $(g,g^h,\dots,g^{h^{n-1}})$, and
independently of $g,h$ this cycle belongs to the graph
$\Gamma_{n,2^n C/(1-2^n\eta)}$.

For the third assertion, let
$\bar k=(k_1,\dots,k_n)\in K^n\setminus\one^n$ be a vertex of a cycle in
$\Gamma_{n,2^n C/(1-2^n\eta)}$. Consider an element $h\in G$ of order
$s n$ for some $s\in\N$. By the condition that $\#X=p$ is prime and the
image of $G$ in $\sym(X)$ is a cyclic group, $s n$ is a power of $p$,
so there exists an orbit $\{v_1,\dots,v_{s n}\}$ of $h$, so labeled
that $v_i^h=v_{i-1}$, indices being read modulo $s n$. For
$i=1,\dots,s n$ define
\[h_i=(h@v_1)^{-1}\cdots(h@v_i)^{-1},
\]
noting $h_i(h@v_i)=h_{i-1}$ for all $i=1,\dots,s n$ since
$h^{s n}=\one$. Denote by `$i\%n$' the unique element of $\{1,\dots,n\}$
congruent to $i$ modulo $n$, and consider the element
\[g=\prod_{i=1}^{s n}\big(v_i*k_{i\%n}^{h_i}\big),
\]
which belongs to $G$ since $G$ is weakly branched. Let
$(k_1^{(1)},\dots,k_n^{(1)})$ be the next vertex on the cycle of
$\bar k$. We then have, using~\eqref{eq:twist},
\[[g,h]=g^{-1}g^h=\prod_{i=1}^{s n}\big(v_i*k_{i\%n}^{-h_i}\big)\prod_{i=1}^{s n}\big(v_{i-1}*k_{i\%n}^{h_i(h@v_i)}\big)=\prod_{i=1}^{s n}\big(v_i*(k_{i\%n}^{(1)})^{h_i}\big),\]
and more generally $E_c(g,h)$ and some of its states are read off the
cycle of $\bar k$. Since this cycle goes through non-trivial group
elements, $E_c(g,h)$ has a non-trivial state for all $c$, so is
non-trivial for all $c$, and Engel($g,h$) does not hold.

\section{Proof of Theorem~\ref{thm:1}}\label{ss:proof}
The Grigorchuk group $G_0$ is contracting, with contraction
coefficient $\eta=1/2$. Therefore, the conditions of validity of
Algorithm~\ref{algo:1} are not satisfied by the Grigorchuk group, so
that it is not guaranteed that the algorithm will succeed, on a given
element $h\in G_0$, to prove that $h$ is not Engel. However, nothing
forbids us from running the algorithm with the hope that it
nevertheless terminates. It seems experimentally that the algorithm
always succeeds on elements of order $4$, and the argument proving the
third claim of Algorithm~\ref{algo:1} (repeated here for convenience)
suffices to complete the proof of Theorem~\ref{thm:1}.

Below is a self-contained proof of Theorem~\ref{thm:1}, extracting the
relevant properties of the previous section, and describing the computer
calculations as they were keyed in.

Consider first $h\in G_0$ with $h^2=\one$. It follows from
Proposition~\ref{prop:torsion} that $h$ is Engel: given $g\in G_0$, we
have $E_{1+k}(g,h)=[g,h]^{(-2)^k}$ so $E_{1+k}(g,h)=\one$ for $k$ larger
than the order of $[g,h]$.

For the other case, we start by a side calculation. In the Grigorchuk
group $G_0$, define $x=[a,b]$ and $K=\langle x\rangle^{G_0}$ as in
Proposition~\ref{prop:branch}, consider the quadruple
\[A_0=(A_{0,1},A_{0,2},A_{0,3},A_{0,4})=(x^{-2}x^{2ca},\,x^{-2ca}x^2x^{2cab},\,x^{-2cab}x^{-2},\,x^2)\]
of elements of $K$, and for all $n\ge0$ define
\[A_{n+1}=(A_{n,1}^{-1}A_{n,2},\,A_{n,2}^{-1}A_{n,3},\,A_{n,3}^{-1}A_{n,4},\,A_{n,4}^{-1}A_{n,1}).\]
\begin{lemma}\label{lem:calculation}
  For all $i=1,\dots,4$, the element $A_{9,i}$ fixes $111112$, is
  non-trivial, and satisfies $A_{9,i}@111112=A_{0,i}$.
\end{lemma}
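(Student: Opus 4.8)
The plan is to verify Lemma~\ref{lem:calculation} by direct (finite)
computation, exploiting the self-similarity of $G_0$ and the
contraction $\eta=1/2$. The quadruple $A_0$ consists of elements of
$K=\langle x\rangle^{G_0}$, hence (by Proposition~\ref{prop:branch})
all their states at any vertex again lie in $G_0$, so the iteration is
well defined. The recursion $A_{n+1,j}=A_{n,j}^{-1}A_{n,j+1}$ (indices
mod $4$) is exactly the ``commutator-difference'' edge relation of the
graph $\Gamma_{4,\infty}$ from Algorithm~\ref{algo:1} with $n=4$: if we
think of $A_{n}$ as the tuple $(g,g^h,g^{h^2},g^{h^3})$ attached to a
putative Engel pair with $h$ of order $4$, then passing from $A_n$ to
$A_{n+1}$ is the step $(g_1,\dots,g_4)\mapsto(g_1^{-1}g_2,\dots)$, and
then (once the images in $\sym(X)$ vanish) descending to states
$(h_1@j,\dots)$. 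So the lemma is precisely the statement that this
walk, started at $A_0$, returns to $A_0$ after $9$ steps along the ray
$111112$, i.e.\ that $A_0$ lies on a nontrivial cycle in
$\Gamma_{4,\infty}$.

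Concretely, I would proceed as follows. First, record the images
$\pi(A_{n,j})\in\Z_{/2}$ for $n=0,\dots,8$; one checks that at certain
steps all four entries fix $X$, at which point the edge in the graph is
taken by descending to states $A_{n,j}@v$ for the appropriate
$v\in\{1,2\}$, while at the remaining steps the edge is the identity
descent $v=\varepsilon$. The word $111112$ has length $6$, so exactly
six of the nine steps are genuine state-descents (at letters
$1,1,1,1,1,2$ in turn) and three are ``stay'' steps where some entry is
active in $\sym(X)$; I would tabulate which is which. Second, at each
step I compute the four group elements $A_{n,j}$ as words in $a,b,c,d$,
using the wreath recursion~\eqref{eq:grigorchuk} together with the
identities $(gh)@v=(g@v^h)(h@v)$ and $g^{-1}@v=(g@v^{g^{-1}})^{-1}$ to
push $@\,$ through products and inverses. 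Since $\|g@j\|\le\frac12\|g\|+C$,
the word lengths stay bounded throughout, so this is a finite
computation; it was in fact carried out in \textsc{Gap} with the
package \textsc{Fr}. Third, after the sixth descent I verify the
identity $A_{9,j}=A_{0,j}$ in $G_0$ for each $j$ (using the solvable
word problem in $G_0$), and I verify each $A_{9,j}\neq\one$, e.g.\ by
exhibiting a word moved by it; the fact that $A_{9,j}$ fixes $111112$
is automatic from the descent having been legal at every step.

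The main obstacle is purely bookkeeping rather than conceptual: one
must carry the four words $A_{n,j}$ through nine rounds of
multiply-invert-restrict without error, and the intermediate words,
though of bounded length, are long enough (a dozen or so generators)
that a hand computation is painful and error-prone. The two subtle
points that need care are (i) getting the orbit labelling of $111112$
under the order-$4$ element right, so that at the three non-descending
steps one really does take the identity edge and not a state edge, and
(ii) checking nontriviality of $A_{9,j}$ rather than merely $A_{9,j}$
lying in $K$ — this is what will feed into the third assertion of
Algorithm~\ref{algo:1} to conclude that no element of $G_0$ of order
divisible by $4$ is Engel. Both are handled by the machine computation;
the lemma as stated is simply the certificate it outputs.
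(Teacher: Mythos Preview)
Your proposal and the paper agree on the essential point: the lemma is a finite computer check, carried out in \textsc{Gap} with \textsc{Fr}. Where you diverge is in the surrounding interpretation, and that interpretation contains a genuine confusion.

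The sequence $A_n$ is defined \emph{purely multiplicatively}: $A_{n+1,i}=A_{n,i}^{-1}A_{n,i+1}$, with no state descent. The paper's proof simply iterates this nine times---producing elements $A_{9,i}$ whose word length may be on the order of $2^9$ times that of $A_{0,i}$---and then checks the three assertions directly (the \textsc{Gap} code is about seven lines). Your claim that ``$\|g@j\|\le\tfrac12\|g\|+C$, so the word lengths stay bounded throughout'' is therefore false for the $A_n$ as defined: no $@$ is applied during the recursion, so contraction never kicks in.

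What you have done is conflate the \emph{definition} of $A_n$ with the graph walk in $\Gamma_{4,\infty}$ from Algorithm~\ref{algo:1}. There is indeed a correspondence---this is exactly why the lemma is useful for Theorem~\ref{thm:1}---but it runs the other way: one first proves the lemma about $A_9$ by brute force, and \emph{then} interprets it as exhibiting a cycle in $\Gamma_{4,\infty}$. Your plan to track the walk with ``six descents and three stays'' presupposes that exactly six of the nine intermediate tuples act trivially on $X$ and that the descents occur along $1,1,1,1,1,2$; this is an output of the computation, not an input. More seriously, your assertion that ``$A_{9,j}$ fixes $111112$ is automatic from the descent having been legal at every step'' is unjustified: the legality of descents concerns the intermediate $A_m@(\text{prefix})$, not the action of $A_9$ itself on the word $111112$, and the $@$ operator is defined whether or not the element fixes the vertex. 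In the paper's direct approach this is simply one of the three equalities verified at the end.
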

\begin{proof}
  This is proven purely by a computer calculation. It is performed as
  follows within \textsc{Gap}:
\begin{Verbatim}[commandchars=\\\{\}]
gap> \textit{LoadPackage("FR");;}
gap> \textit{AssignGeneratorVariables(GrigorchukGroup);;}
gap> \textit{x2 := Comm(a,b)^2;; x2ca := x2^(c*a);; one := a^0;;}
gap> \textit{A0 := [x2^-1*x2ca,x2ca^-1*x2*x2ca^b,(x2ca^-1)^b*x2^-1,x2];;}
gap> \textit{v := [1,1,1,1,1,2];; A := A0;; }
gap> \textit{for n in [1..9] do A := List([1..4],i->A[i]^-1*A[1+i mod 4]); od;}
gap> \textit{ForAll([1..4],i->v^A[i]=v and A[i]<>one and State(A[i],v)=A0[i]);}
true
\end{Verbatim}
\end{proof}

Consider now $h\in G_0$ with $h^2\neq\one$. Again by
Proposition~\ref{prop:torsion}, we have $h^{2^e}=\one$ for some minimal
$e\in\N$, which is furthermore at least $2$. We keep the notation
`$a\%b$' for the unique number in $\{1,\dots,b\}$ that is congruent to
$a$ modulo $b$.

Let $n$ be large enough so that the action of $h$ on $X^n$ has an
orbit $\{v_1,v_2,\dots,v_{2^e}\}$ of length $2^e$, numbered so that
$v_{i+1}^h=v_i$ for all $i$, indices being read modulo $2^e$. For
$i=1,\dots,2^e$ define
\[h_i=(h@v_1)^{-1}\cdots(h@v_i)^{-1},
\]
noting $h_i(h@v_i)=h_{i-1\% 2^e}$ for all $i=1,\dots,2^e$ since $h^{2^e}=\one$,
and consider the element
\[g=\prod_{i=1}^{2^e}\big(v_i*A_{0,i\%4}^{h_i}\big),
\]
which is well defined since $4|2^e$ and belongs to $G_0$ by
Proposition~\ref{prop:branch}. We then have, using~\eqref{eq:twist},
\[[g,h]=g^{-1}g^h=\prod_{i=1}^{2^e}\big(v_i*A_{0,i\%4}^{-h_i}\big)\prod_{i=1}^{2^e}\big(v_{i-1\% 2^e}*A_{0,i\%4}^{h_i(h@v_i)}\big)=\prod_{i=1}^{2^e}\big(v_i*A_{1,i}^{h_i}\big),\]
and more generally
\[E_c(g,h)=\prod_{i=1}^{2^e}\big(v_i*A_{c,i}^{h_i}\big).\]
Therefore, by Lemma~\ref{lem:calculation}, for every $k\ge0$ we have
$E_{9k}(g,h)@v_0(111112)^k=A_{0,1}\neq\one$, so $E_c(g,h)\neq\one$ for all
$c\in\N$ and we have proven that $h$ is not an Engel element.

\section{Other examples}
Similar calculations apply to the Gupta-Sidki group $\Gamma$
introduced in~\cite{gupta-s:burnside}. This is another example of
infinite torsion group, acting on $X^*$ for $X=\{1,2,3\}$ and
generated by the states of the following automaton:
\[\begin{fsa}
  \node[state,inner sep=0pt,minimum size=7mm] (t) at (-3,-1) {$t$};
  \node[state,inner sep=0pt,minimum size=7mm] (T) at (3,1) {$t^{-1}$};
  \node[state,inner sep=0pt,minimum size=7mm] (a) at (-3,1) {$a$};
  \node[state,inner sep=0pt,minimum size=7mm] (A) at (3,-1) {$a^{-1}$};
  \node[state] (e) at (0,0) {$\one$};
  \path (t) edge node {$(1,1)$} (a)
  edge [bend right=10] node[below] {$(2,2)$} (A)
  edge [loop left] node {$(3,3)$} (t)
  (T) edge node {$(1,1)$} (A)
  edge [bend right=10] node[above] {$(2,2)$} (a)
  edge [loop right] node {$(3,3)$} (T)
  (a) edge node [pos=0.2,right=5mm] {\small $(1,2),(2,3),(3,1)$} (e)
  (A) edge node [pos=0.2,left=5mm] {\small $(2,1),(3,2),(1,3)$} (e)
  (e) edge [loop,in=35,out=0,min distance=8mm,looseness=10] node[right] {$(*,*)$} (e);
\end{fsa}\]
The transformations $a,t$ may also be defined recursively by
\begin{equation}\label{eq:guptasidki}
\begin{alignedat}{3}
  (1v)^a&=2v, &\qquad (2v)^a&=3v, &\qquad (3v)^a&=1v,\\
  (1v)^t&=1v^a, &\qquad (2v)^t&=2v^{a^{-1}}, &\qquad (3v)^t&=3v^t.
\end{alignedat}
\end{equation}
The Gupta-Sidki group is contracting, with contraction coefficient
$\eta=1/2$. Again, this is not sufficient to guarantee that
Algorithm~\ref{algo:1} terminates, but it nevertheless did succeed in
proving
\begin{theorem}\label{thm:gs}
  The only Engel element in the Gupta-Sidki group $\Gamma$ is the identity.
\end{theorem}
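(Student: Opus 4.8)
The plan is to mirror exactly the structure of the proof of Theorem~\ref{thm:1}, using the third assertion of Algorithm~\ref{algo:1} together with an explicit computer-verified lemma playing the role of Lemma~\ref{lem:calculation}. First I would record the two easy facts about $\Gamma$ that set up the argument: the group $\Gamma$ is weakly branched (indeed, the subgroup $K=\langle [a,t]\rangle^\Gamma$ satisfies $v*K\subseteq K$ for all $v\in X^*$, analogously to Proposition~\ref{prop:branch}), and by the Gupta--Sidki analogue of Proposition~\ref{prop:torsion} every element of $\Gamma$ has order a power of $3$. Since the only element of $3$-power order that could conceivably be Engel and is not handled by a ``small order'' argument is something we must rule out, and since for the identity the statement is trivial, it suffices to show that \emph{every} non-trivial element $h\in\Gamma$ fails to be Engel; equivalently, for every $e\ge 1$ there is $g\in\Gamma$ with $E_c(g,h)\ne\one$ for all $c$, whenever $h^{3^e}=\one$ and $h\ne\one$.

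Next I would set up the self-similar recursion exactly as in \S\ref{ss:proof}, but with $n=3$ (the prime here is $p=3$) in place of $2$. Concretely: choose an explicit starting triple $A_0=(A_{0,1},A_{0,2},A_{0,3})$ of non-trivial elements of $K$, and define $A_{n+1}=(A_{n,1}^{-1}A_{n,2},\,A_{n,2}^{-1}A_{n,3},\,A_{n,3}^{-1}A_{n,1})$. The key lemma to be proved by a \textsc{Gap}/\textsc{Fr} computation is that there exist an integer $N$ and a word $w\in X^*$ such that for all $i=1,2,3$ the element $A_{N,i}$ fixes $w$, is non-trivial, and satisfies $A_{N,i}@w=A_{0,i}$. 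This is the verification that the triple $A_0$ lies on a non-trivial cycle of the graph $\Gamma_{3,R}$ of Algorithm~\ref{algo:1}, with $w$ tracking the concatenation of the one-letter steps taken whenever the differences fix $X$. Granting this lemma, one argues as in \S\ref{ss:proof}: given $h\ne\one$ with $h^{3^e}=\one$, pick $n$ large enough that $h$ has an orbit $\{v_1,\dots,v_{3^e}\}$ on $X^n$ with $v_{i+1}^h=v_i$, set $h_i=(h@v_1)^{-1}\cdots(h@v_i)^{-1}$, and form $g=\prod_{i=1}^{3^e}\bigl(v_i*A_{0,i\%3}^{h_i}\bigr)\in\Gamma$ (well defined since $3\mid 3^e$ and $\Gamma$ is weakly branched). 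Using~\eqref{eq:twist} exactly as before, one gets $E_c(g,h)=\prod_{i=1}^{3^e}\bigl(v_i*A_{c,i}^{h_i}\bigr)$, and the lemma forces $E_{Nk}(g,h)@v_1 w^k=A_{0,1}\ne\one$ for every $k\ge 0$, so $E_c(g,h)\ne\one$ for all $c$. Hence $h$ is not Engel.

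The one genuine subtlety, beyond transcribing the $p=2$ argument to $p=3$, is the choice of the seed triple $A_0$ and the existence of the returning word $w$: unlike in the Grigorchuk case this must be found by an explicit search, and one has to make sure that the chosen $A_{0,i}$ lie in $K$ (so that $v_i*A_{0,i\%3}^{h_i}\in\Gamma$) and that the cycle it generates is genuinely non-trivial (all $A_{N,i}\ne\one$), not an accidental collapse to the trivial vertex. I expect this to be the main obstacle: it is not a closed-form computation but a finite search whose success is what actually proves the theorem, and the verification that $A_{N,i}@w=A_{0,i}$ for all three coordinates simultaneously is the crux. Once such $(A_0,N,w)$ is exhibited and checked, the rest of the proof is formally identical to \S\ref{ss:proof}, with $2^e$ replaced by $3^e$ throughout and the torsion/weakly-branched inputs supplied by the Gupta--Sidki analogues of Propositions~\ref{prop:torsion} and~\ref{prop:branch}.
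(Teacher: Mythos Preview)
Your proposal is correct and follows essentially the same route as the paper's own sketch: the paper likewise invokes Gupta--Sidki analogues of Propositions~\ref{prop:torsion} and~\ref{prop:branch} (with $[\Gamma,\Gamma]$ playing the role of $K$, which coincides with your $\langle[a,t]\rangle^\Gamma$) and an analogue of Lemma~\ref{lem:calculation} for a triple $A_0$, then reruns the argument of~\S\ref{ss:proof} with $3^e$ in place of $2^e$. The only thing the paper adds beyond your outline is the explicit data for the computer lemma: $A_0=([a^{-1},t],\,[a,t]^a,\,[t^{-1},a^{-1}])$, period $N=4$, and returning word $w=122$, i.e.\ $A_{4,i}@122=A_{0,i}$ for $i=1,2,3$.
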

We only sketch the proof, since it follows that of Theorem~\ref{thm:1}
quite closely. Analogues of Propositions~\ref{prop:torsion}
and~\ref{prop:branch} hold, with $[\Gamma,\Gamma]$ in the r\^ole of
$K$. An analogue of Lemma~\ref{lem:calculation} holds with
$A_0=([a^{-1},t],[a,t]^a,[t^{-1},a^{-1}])$ and $A_{4,i}@122=A_{0,i}$.

\section{Closing remarks}
An important feature of automaton groups is their amenability to
computer experiments, and even as in this case of rigorous
verification of mathematical assertions;
see also~\cite{klimann-mairesse-picantin:computations}, and the numerous
decidability and undecidability of the finiteness property
in~\cites{akhavi-klimann-lombardy-mairesse-picantin:finiteness,gillibert:finiteness,klimann:finiteness}.

The proof of Theorem~\ref{thm:1} relies on a computer calculation. It could
be checked by hand, at the cost of quite unrewarding effort. One of the
purposes of this article is, precisely, to promote the use of computers
in solving general questions in group theory: the calculations performed,
and the computer search involved, are easy from the point of view of a
computer but intractable from the point of view of a human.

The calculations were performed using the author's group theory
package \textsc{Fr}, specially written to manipulate automaton
groups. This package integrates with the computer algebra system
\textsc{Gap}~\cite{gap4:manual}, and is freely available from the
\textsc{Gap} distribution site
\[\verb+http://www.gap-system.org+\]

It would be dishonest to withhold from the reader how I arrived at the
examples given for the Grigorchuk and Gupta-Sidki groups. I started with
small words $g,h$ in the generators of $G_0$, respectively $\Gamma$,
and computed $E_c(g,h)$ for the first few values of $c$. These
elements are represented, internally to \textsc{Fr}, as Mealy
automata. A natural measure of the complexity of a group element is
the size of the minimized automaton, which serves as a canonical
representation of the element.

For some choices of $g,h$ the size increases exponentially with $c$,
limiting the practicality of computer experiments. For others (such as
$(g,h)=((b a)^4c,ad)$ for the Grigorchuk group), the size increases
roughly linearly with $c$, making calculations possible for $c$ in the
hundreds. Using these data, I guessed the period $p$ of the recursion
($9$ in the case of the Grigorchuk group), and searched among the
states of $E_c(g,h)$ and $E_{c+p}(g,h)$ for common elements; in the
example, I found such common states for $c=23$. I then took the
smallest-size quadruple of states that appeared both in $E_c(g,h)$ and
$E_{c+p}(g,h)$ and belonged to $K$, and expressed the calculation
taking $E_c(g,h)$ to $E_{c+p}(g,h)$ in the form of
Lemma~\ref{lem:calculation}.

It was already shown by Bludov~\cite{bludov:engel} that the wreath
product $G_0^4\rtimes D_4$ is not Engel. He gave, in this manner, an
example of a torsion group in which a product of Engel elements is not
Engel. Our proof is a refinement of his argument. In fact, his result
may also be used to obtain another proof of the fact that $G_0$ is not
Engel: the Grigorchuk contains a copy of $D_4$, say generated by
$a,d$, which has an orbit of size $4$, for example
$\{111,112,211,212\}$. The branching subgroup $K$ contains a subgroup,
for example the stabilizer $K_{111}$ of $111$, which maps onto $G_0$
by restriction to the subtree $111X^*$. The Grigorchuk group therefore
contains the subgroup
$\langle 111*K_{111},a,d\rangle\cong K_{111}^4\rtimes D_4$ which maps
onto the non-Engel group $G_0^4\rtimes D_4$, so $G_0$ itself is not
Engel.

A direct search for the elements $A_{0,1},\dots,A_{0,4}$ appearing in
the proof of Theorem~\ref{thm:1} would probably not be successful, and
has not yielded simpler elements than those given before
Lemma~\ref{lem:calculation}, if one restricts them to belong to $K$;
one can only wonder how Bludov found the quadruple $(\one,d,ca,ab)$,
presumably without the help of a computer.

\section*{Acknowledgments}
I am deeply grateful to Anna Erschler for stimulating my interest in
this question and for having suggested a computer approach to the
problem, and to Ines Klimann and Matthieu Picantin for helpful
discussions that have improved the presentation of this text.

\begin{bibdiv}
\begin{biblist}
\bib{abert:nonfree}{article}{
  author={Ab\'ert, Mikl\'os},
  title={Group laws and free subgroups in topological groups},
  journal={Bull. London Math. Soc.},
  volume={37},
  date={2005},
  number={4},
  pages={525\ndash 534},
  issn={0024-6093},
  review={\MR {2143732}},
  eprint={arXiv:math.GR/0306364},
}

\bib{akhavi-klimann-lombardy-mairesse-picantin:finiteness}{article}{
  author={Akhavi, Ali},
  author={Klimann, Ines},
  author={Lombardy, Sylvain},
  author={Mairesse, Jean},
  author={Picantin, Matthieu},
  title={On the finiteness problem for automaton (semi)groups},
  journal={Internat. J. Algebra Comput.},
  volume={22},
  date={2012},
  number={6},
  pages={1250052, 26},
  issn={0218-1967},
  review={\MR {2974106}},
  doi={10.1142/S021819671250052X},
}

\bib{bandman+:dynamics}{article}{
  author={Bandman, Tatiana},
  author={Grunewald, Fritz},
  author={Kunyavski{\u \i }, Boris},
  title={Geometry and arithmetic of verbal dynamical systems on simple groups},
  note={With an appendix by Nathan Jones},
  journal={Groups Geom. Dyn.},
  volume={4},
  date={2010},
  number={4},
  pages={607--655},
  issn={1661-7207},
  review={\MR {2727656 (2011k:14020)}},
  doi={10.4171/GGD/98},
}

\bib{bartholdi:lowerbd}{article}{
   author={Bartholdi, Laurent},
   title={Lower bounds on the growth of a group acting on the binary rooted
   tree},
   journal={Internat. J. Algebra Comput.},
   volume={11},
   date={2001},
   number={1},
   pages={73\ndash 88},
   issn={0218-1967},
   review={\MR{1818662 (2001m:20044)}},
   doi={10.1142/S0218196701000395},
   eprint={arXiv:math/9910068},
}

\bib{bartholdi:upperbd}{article}{
   author={Bartholdi, Laurent},
   title={The growth of Grigorchuk's torsion group},
   journal={Internat. Math. Res. Notices},
   date={1998},
   number={20},
   pages={1049\ndash 1054},
   issn={1073-7928},
   review={\MR{1656258 (99i:20049)}},
   doi={10.1155/S1073792898000622},
   eprint={arXiv:math/0012108},
}

\bib{bartholdi-g:parabolic}{article}{
  author={Bartholdi, Laurent},
  author={Grigorchuk, Rostislav I.},
  title={On parabolic subgroups and Hecke algebras of some fractal groups},
  journal={Serdica Math. J.},
  volume={28},
  date={2002},
  number={1},
  pages={47\ndash 90},
  issn={1310-6600},
  review={\MR {1899368 (2003c:20027)}},
  eprint={arXiv:math/9911206},
}

\bib{bartholdi-g-s:bg}{article}{
  author={Bartholdi, Laurent},
  author={Grigorchuk, Rostislav I.},
  author={{\v {S}}uni{\'k}, Zoran},
  title={Branch groups},
  conference={ title={Handbook of algebra, Vol. 3}, },
  book={ publisher={North-Holland}, place={Amsterdam}, },
  date={2003},
  pages={989\ndash 1112},
  review={\MR {2035113 (2005f:20046)}},
  doi={10.1016/S1570-7954(03)80078-5},
  eprint={arXiv:math/0510294},
}

\bib{bartholdi:bsvlcs}{article}{
  author={Bartholdi, Laurent},
  title={The 2-dimension series of the just-nonsolvable BSV group},
  journal={New Zealand J. Math.},
  volume={33},
  date={2004},
  number={1},
  pages={17\ndash 23},
  issn={1171-6096},
  review={\MR {2066313 (2005f:20063)}},
  eprint={arXiv:math/0104076},
}

\bib{bludov:engel}{article}{
  author={Bludov, Vasily V.},
  title={An example of not Engel group generated by Engel elements},
  conference={ title={A Conference in Honor of Adalbert Bovdi's 70th Birthday, November 18\ndash 23, 2005}, },
  book={ place={Debrecen, Hungary}, },
  date={2005},
  pages={7\ndash 8},
}

\bib{bondarenko-b-s-z:conjugacy}{article}{
  author={Bondarenko, Ievgen V.},
  author={Bondarenko, Natalia V.},
  author={Sidki, Said N.},
  author={Zapata, Flavia R.},
  title={On the conjugacy problem for finite-state automorphisms of regular rooted trees},
  note={With an appendix by Rapha\"el M. Jungers},
  journal={Groups Geom. Dyn.},
  volume={7},
  date={2013},
  number={2},
  pages={323--355},
  issn={1661-7207},
  review={\MR {3054572}},
  doi={10.4171/GGD/184},
}

\bib{brieussel:phd}{thesis}{
   author={Brieussel, J\'er\'emie},
   title={Growth of certain groups of automorphisms of rooted trees},
   language={French},
   type={Doctoral Dissertation},
   place={Universit\'e de Paris 7},
   year={2008},
   url={http://www.institut.math.jussieu.fr/\char126theses~/2008/brieussel/},
}

\bib{brunner-s-v:nonsolvable}{article}{
  author={Brunner, Andrew~M.},
  author={Sidki, Said~N.},
  author={Vieira, Ana~Cristina},
  title={A just nonsolvable torsion-free group defined on the binary tree},
  date={1999},
  issn={0021-8693},
  journal={J. Algebra},
  volume={211},
  number={1},
  pages={99\ndash 114},
  review={\MR {1 656 573}},
}

\bib{dehn:unendliche}{article}{
  author={Dehn, M.},
  title={\"Uber unendliche diskontinuierliche Gruppen},
  language={German},
  journal={Math. Ann.},
  volume={71},
  date={1911},
  number={1},
  pages={116--144},
  issn={0025-5831},
  review={\MR {1511645}},
  doi={10.1007/BF01456932},
}

\bib{erschler:iteratedidentities}{article}{
  author={Erschler, Anna~G.},
  title={Iterated identities and iterational depth of groups},
  year={2014},
  eprint={arxiv:math/1409.5953},
}

\bib{gap4:manual}{manual}{
  title={GAP --- Groups, Algorithms, and Programming, Version 4.4.10},
  label={GAP08},
  author={The GAP~Group},
  date={2008},
  url={\texttt {http://www.gap-system.org}},
}

\bib{gecseg:products}{book}{
  author={G{\'e}cseg, Ferenc},
  title={Products of automata},
  series={EATCS Monographs on Theoretical Computer Science},
  volume={7},
  publisher={Springer-Verlag},
  address={Berlin},
  year={1986},
  pages={viii+107},
  isbn={3-540-13719-X},
  review={\MR {88b:68139b}},
}

\bib{gecseg-c:ata}{book}{
  author={G{\'e}cseg, Ferenc},
  author={Cs{\'a}k{\'a}ny, B{\'e}la},
  title={Algebraic theory of automata},
  publisher={Akademiami Kiado},
  address={Budapest},
  date={1971},
}

\bib{gillibert:finiteness}{article}{
  author={Gillibert, Pierre},
  title={The finiteness problem for automaton semigroups is undecidable},
  journal={Internat. J. Algebra Comput.},
  volume={24},
  date={2014},
  number={1},
  pages={1--9},
  issn={0218-1967},
  review={\MR {3189662}},
  doi={10.1142/S0218196714500015},
}

\bib{godin-klimann-picantin:torsion-free}{article}{
  author={Godin, Thibault},
  author={Klimann, Ines},
  author={Picantin, Matthieu},
  title={On torsion-free semigroups generated by invertible reversible Mealy automata},
  conference={ title={Language and automata theory and applications}, },
  book={ series={Lecture Notes in Comput. Sci.}, volume={8977}, publisher={Springer, Cham}, },
  date={2015},
  pages={328--339},
  review={\MR {3344813}},
  doi={10.1007/978-3-319-15579-1\_25},
}

\bib{golod:burnside}{article}{
  author={Golod, Evgueni{\u \i }~S.},
  title={Some problems of Burnside type},
  language={Russian},
  conference={ title={Proc. Internat. Congr. Math.}, address={Moscow}, date={1966}, },
  book={ publisher={Izdat. ``Mir'', Moscow}, },
  date={1968},
  pages={284--289},
  review={\MR {0238880 (39 \#240)}},
}

\bib{grigorchuk:burnside}{article}{
  author={Grigorchuk, Rostislav~I.},
  title={On Burnside's problem on periodic groups},
  date={1980},
  issn={0374-1990},
  journal={{\cyreight Funktsional. Anal. i Prilozhen.}},
  volume={14},
  number={1},
  pages={53\ndash 54},
  note={English translation: {Functional Anal. Appl. \textbf {14} (1980), 41\ndash 43}},
  review={\MR {81m:20045}},
}

\bib{grigorchuk:growth}{article}{
  author={Grigorchuk, Rostislav~I.},
  title={On the Milnor problem of group growth},
  date={1983},
  issn={0002-3264},
  journal={Dokl. Akad. Nauk SSSR},
  volume={271},
  number={1},
  pages={30\ndash 33},
  review={\MR {85g:20042}},
}

\bib{gruenberg:engel}{article}{
  author={Gruenberg, Karl~W.},
  title={The Engel elements of a soluble group},
  journal={Illinois J. Math.},
  volume={3},
  date={1959},
  pages={151--168},
  issn={0019-2082},
  review={\MR {0104730 (21 \#3483)}},
}

\bib{gupta-s:burnside}{article}{
    author={Gupta, Narain~D.},
    author={Sidki, Said~N.},
     title={On the Burnside problem for periodic groups},
      date={1983},
   journal={Math. Z.},
    volume={182},
     pages={385\ndash 388},
}

\bib{jackson:undecidable}{article}{
  author={Jackson, Marcel},
  title={On locally finite varieties with undecidable equational theory},
  journal={Algebra Universalis},
  volume={47},
  date={2002},
  number={1},
  pages={1--6},
  issn={0002-5240},
  review={\MR {1901727 (2003b:08002)}},
  doi={10.1007/s00012-002-8169-0},
}

\bib{klimann-mairesse-picantin:computations}{article}{
  author={Klimann, Ines},
  author={Mairesse, Jean},
  author={Picantin, Matthieu},
  title={Implementing computations in automaton (semi)groups},
  conference={ title={Implementation and application of automata}, },
  book={ series={Lecture Notes in Comput. Sci.}, volume={7381}, publisher={Springer, Heidelberg}, },
  date={2012},
  pages={240--252},
  review={\MR {2993189}},
  doi={10.1007/978-3-642-31606-7\_21},
}

\bib{klimann:finiteness}{article}{
  author={Klimann, Ines},
  title={The finiteness of a group generated by a 2-letter invertible-reversible Mealy automaton is decidable},
  conference={ title={30th International Symposium on Theoretical Aspects of Computer Science}, },
  book={ series={LIPIcs. Leibniz Int. Proc. Inform.}, volume={20}, publisher={Schloss Dagstuhl. Leibniz-Zent. Inform., Wadern}, },
  date={2013},
  pages={502--513},
  review={\MR {3090008}},
}

\bib{leonov:identities}{article}{
  author={Leonov, Yuri{\u \i }~G.},
  title={On identities in groups of automorphisms of trees},
  date={1997},
  journal={Visnyk of Kyiv State University of T.G.Shevchenko},
  number={3},
  pages={37\ndash 44},
}

\bib{markoff:wpsemigroup}{article}{
   author={Markoff, A.},
   title={On the impossibility of certain algorithms in the theory of
   associative systems},
   journal={C. R. (Doklady) Acad. Sci. URSS (N.S.)},
   volume={55},
   date={1947},
   pages={583--586},
   review={\MR{0020528}},
}

\bib{medvedev:engel}{article}{
  author={Medvedev, Yuri},
  title={On compact Engel groups},
  journal={Israel J. Math.},
  volume={135},
  date={2003},
  pages={147--156},
  issn={0021-2172},
  review={\MR {1997040 (2004f:20072)}},
  doi={10.1007/BF02776054},
}

\bib{mihailova:directproducts}{article}{
  author={Miha{\u \i }lova, K. A.},
  title={The occurrence problem for direct products of groups},
  language={Russian},
  journal={Dokl. Akad. Nauk SSSR},
  volume={119},
  date={1958},
  pages={1103\ndash 1105},
  issn={0002-3264},
  review={\MR {0100018 (20 \#6454)}},
}

\bib{nekrashevych:ssg}{book}{
  author={Nekrashevych, Volodymyr~V.},
  title={Self-similar groups},
  series={Mathematical Surveys and Monographs},
  volume={117},
  publisher={American Mathematical Society, Providence, RI},
  date={2005},
  pages={xii+231},
  isbn={0-8218-3831-8},
  review={\MR {2162164 (2006e:20047)}},
  doi={10.1090/surv/117},
}

\bib{neumann:varieties}{book}{
  author={Neumann, Hanna},
  title={Varieties of groups},
  publisher={Springer-Verlag New York, Inc., New York},
  date={1967},
  review={\MR {35 \#6734}},
}

\bib{novikov:wp}{article}{
   author={Novikov, Petr S.},
   title={On algorithmic unsolvability of the problem of identity},
   language={Russian},
   journal={Doklady Akad. Nauk SSSR (N.S.)},
   volume={85},
   date={1952},
   pages={709\ndash 712},
   review={\MR{0052436 (14,618h)}},
}

\bib{post:wpsemigroup}{article}{
   author={Post, Emil L.},
   title={Recursive unsolvability of a problem of Thue},
   journal={J. Symbolic Logic},
   volume={12},
   date={1947},
   pages={1--11},
   issn={0022-4812},
   review={\MR{0020527}},
}

\bib{straubing-weil:varieties}{article}{
  author={Straubing, Howard},
  author={Weil, Pascal},
  title={Varieties},
  year={2015},
  eprint={arxiv:math/1502.03951},
}

\end{biblist}
\end{bibdiv}

\end{document}